  \newtheorem{thm}{Theorem}[section]
 \theoremstyle{definition}
 \newtheorem{defn}[thm]{Definition}
 \theoremstyle{remark}
 \newtheorem*{ex}{Example}
 \numberwithin{equation}{section}
 \newtheorem{axiom}[thm]{Axiom}
\newtheorem{problem}[thm]{Problem}
\newcommand{\N}{\mathbb{N}}
\newcommand{\Z}{\mathbb{Z}}
\newcommand{\R}{\mathbb{R}}
\newcommand{\C}{\mathbb{C}}
\newcommand{\T}{\mathbb{T}}
\newcommand{\F}{\mathbb{F}}
\newcommand{\G}{\mathbb{G}}
\newcommand{\BS}{\mathbb{S}}
\newcommand{\BP}{\mathbb{P}}
\newcommand{\BH}{\mathbb{H}}
\begin{document}

\title{Examples of Morphological Calculus}
\author{Frank Sommen}
\address{Clifford Research Group,\\
	Department of Mathematical Analysis,\\
	Ghent University,\\
	Galglaan 2, B-9000 Gent, Belgium}
\curraddr{}
\email{fs@cage.ugent.be}



\date{}

\subjclass{30G35, 32A30}

\keywords{Lie groups; homogeneous spaces; Fibre bundles; Poincar\'e polynomial; twistor space.}

\begin{abstract}
In this paper we present an introduction to morphological calculus in which geometrical objects play the rule of generalised natural numbers.
\end{abstract}

\maketitle
\tableofcontents

\section{Introduction}

Morphological calculus in an extension of the calculus of natural numbers $1,2,3$ etc whereby all sorts of geometrical objects are seen as generalised natural numbers. To make a list, we have
\begin{itemize}
\item the natural numbers $1,2,3,\ldots$
\item the real line $\R$
\item the set of natural numbers $\N$
\item carthesian spaces $\R^2,\R^3, \ldots$
\item projective spaces $\R \BP_n,\C \BP_n, \ldots$
\item Spheres $S^{n-1},\C S^{n-1}$
\item Groups like $SO(n),U(n), GL(n,\R),\ldots$
\item Gra{\ss}mann manifold  $G(m,k,\R)$
\end{itemize}
and other groups and homogeneous spaces. In fact any kind of geometrical objects can be added to the list.\\
The rules for morphological calculus extend the rules for calculating with natural numbers. We have
\begin{enumerate}
\item The addition $t_1+t_2+\cdots+t_k$\\
The terms $t_1,\ldots,t_k$ are supposed to represent morphological objects and the addition represents any object that can be formed by making a disjoint union of the objects $t_1,\ldots,t_k$ and glueing them together when possible. This glueing process is itself not part of the calculus so there in no unique way to do it and one also needn't do it; one can simply put the objects $t_1,\ldots,t_k$ in a list, as the language of calculus suggests.\\
For example $1+2+3+4$ can be visualised as a triangle of $10$ points: 1+(1+1)+(1+1+1)+(1+1+1+1).\\
The terms $t_1,\ldots,t_k$ in an addition may simply be names for morphological objects but, they also could be expressions between brackets like in:
$$
5+(3+1)+2+(1+2+7).
$$
The material between brackets is interpreted as a single morphological object.
\item The subtraction $t_1-t_2$\\
This means that the object $t_2$ is deleted from the object $t_1$. For example $3-2$ means to delete $2$ points from a set of $3$ points or $\R-1$ means to delete a point from a line. The subtraction  represents a problem: we have to look for an object such that $t_1-t_2=c$ or such that $t_1$ can be written as $c+t_2$. There may not be a morphologically acceptable solution for this. For example
$$
0=1-1
$$
means to create a point $1$ and then to wipe if off.\\
Also negative numbers like $-1,-2,\ldots$ are no objects of morphological calculus although they may be meaningful as actions: $-1=$ to delete one point, $-2=$ to delete two points, etc.\\
The subtraction is presented as a binary operation $t_1-t_2$ here, but of course one may also consider extended expression like $7-3-2+4-1$, as long as things add up to a morphological object.\\
\item The multiplication $v\cdot w$\\
For the natural numbers, the multiplication is a notation for repeated addition, so for example
\begin{itemize}
\item $1\cdot a=a$
\item $2\cdot a=a+a$
\item $3\cdot a=a+a+a$
\end{itemize}
etc. In other words, the meaning of multiplication is in fact determined by the rule of distributivity
$$
(t_1+t_2+\cdots+t_k)\cdot w=t_1\cdot w+t_2\cdot w+\cdots+t_k\cdot w.
$$
In morphological calculus, the product $v\cdot w$ means that every point of the object $v$ is replaced by a copy of $w$ and then all those copies of $w$ are possibly  glued together in some way that is not specified by the language of calculus.\\
Typical examples are: the carthesian product $v\times w$, a fibre bundle $E=M\cdot F$ with base space $M$ and fibre $F.$\\
One can also consider long multiplication like $v_1\cdot v_2\cdots v_k$ that may correspond to iterated fibre bundles. note that the fibre bundle interpretation is only an option; it isn't a must and it will not always be available.
\item The division $v/w$\\
Like the subtraction, also the division is seen as a problem: to find a morphological object $c$ for which $v=c\cdot w$. Any good solution to this will be denoted as $v/w$ and again there may not always be a solution. For example rational numbers like $1/2, 1/3, 4/7$ etc, are no morphological objects even though one may write $7/3=6/3+1/3=2+1/3.$
\end{enumerate}
Hence the language of morphological calculus is similar to that of natural numbers. There are however some aspects of language of calculus that cause dilemmas and also need more explanation
\begin{enumerate}
\item Names, definitions, substitutions.\\
Every morphological object has a name attached to it. For example $1,2,3,\ldots$ the natural members, $\R$ the real line and so on. Then every name is given a definition or several definitions of the form
$$
\mathrm{Name\;=\;Expression}
$$
the first main examples being the definitions of the natural numbers
$$
2=1+1,\; 3=1+1+1,\;4=1+1+1+1\;
$$
and so on.\\
Such definitions may come from geometry, but they are algebraic expression of some geometrical decomposition of an object, i.e., geometrical knowledge can only enter the calculus via algebraic relations of the form $\mathrm{Name\;=\;Expression}$.\\
When a name $N$ appears somewhere in an expression $E$, i.e., $E=E(N)$ and when one has a definition $N=Expr.$; then one may perform the substitution $E(N)=E((\mathrm{Expr.}))$, i.e., replacing the name $N$ by the expression $(Expr.)$ between brackets. Later on one may investigate how and when brackets may be removed. We do not use brackets in a redundant manner like e.g. $(7)$ is not used, $(\mathrm{Name})$ is not used $((\mathrm{Expr.}))$ is not used, $\mathrm{Name}=(\mathrm{Expr.})$ is not used.\\

Example: (The Fibonacci trees)\\
These morphological structures are defined by
$$
f_1,\;f_2=1,\;f_n=f_{n-1}+f_{n-2}
$$
leading to the solutions
\begin{gather*}
\begin{aligned}
&  f_2=1+1\\
&  f_3=(1+1)+1\\
&  f_4=((1+1)+1)+(1+1)\\
&  f_5=(((1+1)+1)+(1+1))+((1+1)+1),
\end{aligned}
\end{gather*}
so what appears here are not just the Fibonacci numbers $2,3,5,8,$ but the tree-like structures that give rise to these numbers if one removes the brackets. This tree-like structure is a typical example of a morphological object.

\item Commutativity, Associativity\\
In morphological calculus the addition $t_1+\cdots+t_k$ is in the first place a listing of objects; it is not commutative. Also within an addition one may consider expressions between brackets and since brackets refer to morphological objects one can't just ignore them; the addition is not just associative. On the other hand, for the natural numbers the addition also refers to the total quantity or sum. For example the total quantity of $5+(3+1)+2$ may be evaluated as:
\begin{gather*}
 \begin{aligned}
 5+(3+1)+2& =(1+1+1+1+1)+((1+1+1)+1)+(1+1)\\
&  =(1+1+1+1+1)+(1+1+1+1)+(1+1)\\
&  =1+1+1+1+1+1+1+1+1+1+1=11,
 \end{aligned}
\end{gather*}
so it requires substitutions $5\rightarrow(1+1+1+1+1)$ etc. and deleting the brackets. So the total quantity is evaluated within the language of calculus and not in some outside theory.  It corresponds to a morphological process in which the morphological structure is constantly changed to the extent that in the final evaluation of the quantity, the identity of the numbers $5,3,\ldots$ as well as their place in the context is lost. Commutativity, substitutions and putting and deleting brackets are guaranteed in so far that the total quantity is preserved, but they are also mutations. For more general morphological objects, such as the line $\R$ the notion of quantity is not defined and we will illustrate that, if it were defined it wouldn't correspond to the cardinality of a set.\\
Yet we calculate as if these objects would have a form of quantity and so, in particular, terms in an addition may be commuted, substituted and brackets may be put or deleted.\\
For the multiplication $v\cdot w$, commutativity $v\cdot w=w\cdot v$ is even less obvious especially if one thinks of a fibre bundle $E=M\cdot F$. But again these geometrical interpretations happen outside morphological calculus and the total quantity of $v\cdot w$ is the same as that of $w\cdot v$. Moreover, to be able to calculate one has to be able to commute factors in a product, even though this deforms the morphological structure. Also the law of distributivity
$$
(t_1+\cdots+t_k)\cdot w=t_1\cdot w+\cdots+t_k\cdot w
$$
is essential to give a meaning to the product while as the same time it is a deformation.\\
So, to conclude, the morphological universe consists of the totality of all meaningful algebraic expressions based on a set of names for morphological objects together with their definitions within calculus. The calculus rules, leading to the relations $A=B$ are the same as for the natural numbers and the relations $A=B$ are interpreted at the same time as morphological deformation and as preservation of quantity, whatever meaning this may have.
\end{enumerate}


\section{The Real Line}

The real line $\R$ is in mathematics defined as the set of all real numbers, represented as points on that line. It is hence an infinite point set and its cardinality $c$ is called the continuum; it is larger than the cardinality $\aleph_0$ of the natural numbers.\\
The real line decomposes as
$$
 \begin{aligned}
 & \R=\R_-\cup\{0\}\cup\R_+
 \end{aligned}
$$
with\newline
$\R_-=\;]-\infty,0[\;:\;$ the halfline of negative numbers.\\
\noindent $\R_+=\;]0,+\infty[\;:\;$ the halfline of positive numbers.\\
So $\R_+$ and $\R_-$ are open intervals that are closed off and glued together by the point $\{0\}$ to form the real line.\\
Morphologically we write this disjoint union as
$$
 \begin{aligned}
 & \R=\R_-+1+\R_+
 \end{aligned}
$$
whereby ``1'' represents the middle point $\{0\}$.\\
Both $\R_+$ and $\R_-$ are halflines having ``the same shape'', so we identify $\R_-=\R_+$, leading to the first definition
$$
 \begin{aligned}
 & \R=\R_++1+\R_+,
 \end{aligned}
$$
which, after commuting terms, leads to
$$
 \begin{aligned}
 & \R=2\R_++1.
 \end{aligned}
$$
Next one may argue that all open intervals $]a,b[$ have ``the same shape'', so they are all copies of $\R$ and, in particular, we may identify
$$
 \begin{aligned}
 & \R_+=\R,
 \end{aligned}
$$
leading to the relations
$$
 \begin{aligned}
 & \R=\R+1+\R=2\R+1.
 \end{aligned}
$$
This may be interpreted as the way to produce an open interval or curve $]a,c[$ by taking an open interval $]a,b[$, glue to it a point $\{b\}$ and then glue to the next open interval or curve $]b,c[$.\\
The question now is: what is the quantity of $\R$?\\
If it is the cardinality ``$c$'' then one should identify $\R+1$ with $\R$ but $\R+1$ would be a semi-interval like $]0,1]$, open from one side and closed from the other, which is not the same as $]0,1[$.\\
Next, the relation $\R=\R+1+\R$ indicates the fact that $\R$ contains at least one point and, by iteration
$$
\R=\R+1+\R=(\R+1+\R) + 1 + (\R+1+\R)=\cdots
$$
we obtain $3$ points, $7$ points, $15$ points etc., any finite number of points. So the morphological version of $\R$ seems to house infinity many points.\\
Now let us consider the relation
$$
\R=\R+1+\R=2\R+1
$$
as an equation. Then by subtracting $\R$ from both sides we get
$$
\R+1=0
$$
and by subtracting $1$ we get
$$
\R=0-1=-1,
$$
so that the total quantity of $\R$ should be $-1$.\\
This clearly conflicts with the idea of $\R$ being a set of points; the morphological line is hence not merely a set of points but rather a brand new object that doesn't quantify as a pointset. Of course one could argue that also
$$
\mathrm{infinity}=2\mathrm{infinity}+1,
$$
but infinity is a too trivial and vague number to work with for it absorbs everything.\\
There is an interesting interpretation for $\R=-1.$\\
Every manifold or surface of finite dimension may be represented by a cell complex, which we may represent by a polynomial
$$
a_o\R^{n}+a_1\R^{n-1}+\cdots+a_n,\;\; a_0\in\N, \;a_1,\ldots,a_n\in \N\cup\{0\}.
$$
By making the identification $\R=-1$ we obtain
$$
e\{M\}=a_0(-1)^n+a_1(-1)^{n-1}+\cdots+a_n,
$$
which is the Euler characteristic $e\{M\}$ of manifold $M$.\\
The Euler number $e\{M\}$ is a topological invariant and for a given manifold $M$ it is independent of the cell decomposition of that manifold. To see this, note that for any two cell decompositions of $M$ there exists a kind cell decomposition that refines both of them and so it suffices to consider the case $M=\R^n$. Moreover, every cell decomposition of $\R^n$ may be obtained from simple cell decompositions of the form $\R^j=2\R^j +\R^{j-1}$, which proves the invariance of $e\{M\}$ morphologically.

The fact that morphological calculus respects the Euler characteristic is like a corner stone (it is the final invariant that is preserved!). But as it is now, morphological calculus is reduced to the calculus of the integers $\Z$ and a point $1$ is identified with a closed interval $\R+2$ a plane $\R^2$ is identified with a point $1$.

Hence, the idea of a line as an infinite point set is completely lost and also the dimension of an object is not preserved. As a result we have the identification $\R+1=0$  between a semi-interval (or circle) $\R+1$ and the number zero and, in fact $\R=-1$ between a line $\R$ and the number $-1$, while numbers $0$ and $-1$ are no point sets and hence no objects.\\
To overcome this collapse of the notion of dimension we are going to introduce the following assumption.

\begin{axiom}
Morphological calculations are only granted if all the algebraic expressions and operations make sense in terms of geometrical objects.
\end{axiom}

Hence, in particular, number zero $0$ and negative numbers $-1,-2,$ etc. are hereby excluded or at least pushed to the background. Moreover, a relation like
$$
\R=\R+1+\R=2\R+1
$$
does not automatically allow one to solve it like an equation; one could also simply interpret it by stating that one is allowed to replace $\R$ by $2\R+1$ or vice-versa within calculations and nothing more. Hence, it does not automatically imply e.g. that $\R+1=0$ or even $\R-1\equiv2\R,$ although this last relation $\R-1=2\R$ makes morphological sense. This now leads to the following result.

\begin{thm}[Morphological Stability]
Under the assumption of the relation $\R=2\R+1,$ every cell complex $a_0\R^{n}+a_1\R^{n-1}+\cdots+a_0$ is equivalent to either
$$
a\R^n,\;\; a\in\N\;\;\; or \;\;\; \R^n+b\R^{n-1}, \;\; b\in \N,
$$
no further identifications being possible.
\end{thm}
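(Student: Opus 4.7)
The plan is to proceed in three steps: identify the Euler characteristic as the invariant that controls equivalence under the moves $\R^{j} \leftrightarrow 2\R^{j}+\R^{j-1}$; give an explicit reduction algorithm bringing any cell complex to one of the listed canonical shapes; and verify that distinct canonical representatives lie in distinct equivalence classes, whence no further identifications are possible.

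Invariance is immediate: the numerical substitution $\R=-1$ satisfies $\R = 2\R+1$, hence also each iterated relation $\R^{j} = 2\R^{j} + \R^{j-1}$, so $\chi(P) := P(-1)= \sum_{k}(-1)^{k}p_{k}$ is preserved by every allowed move. For the inequivalence of canonical forms, the two families carry Euler characteristics $a(-1)^{n}$ and $(-1)^{n-1}(b-1)$; distinct values of $a$ (resp.\ $b$) give distinct invariants, and an equality across families would force $a+b=1$, impossible with $a,b\in\N$. Moreover, the sign of $(-1)^{n}\chi$ partitions $\Z$ between the two families, so the list is exhaustive as well as disjoint.

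The substance of the theorem is the reduction algorithm. I would induct on the pair $(n-\ell,\,p_{\ell})$ in lexicographic order, where $\ell := \min\{k : p_{k}\ge 1\}$. In the base case $n-\ell \le 1$, the complex has the shape $p_{n}\R^{n} + p_{n-1}\R^{n-1}$, and iterating the contraction $2\R^{n}+\R^{n-1}\to \R^{n}$ terminates once either $p_{n}=1$ or $p_{n-1}=0$, landing in a canonical form. For $n-\ell\ge 2$, cascade expansions $\R^{k}\to 2\R^{k}+\R^{k-1}$ starting from $k=n$ and descending through $k=\ell+2$; this produces a positive coefficient at $\R^{\ell+1}$ while leaving every degree below $\ell$ untouched. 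One further expansion at level $\ell+2$ boosts $p_{\ell+1}$ to at least two, whereupon the contraction $2\R^{\ell+1}+\R^{\ell}\to\R^{\ell+1}$ decrements $p_{\ell}$ by one. Iterating the cycle drains $p_{\ell}$ to zero, so the lowest occupied degree strictly increases and the induction closes.

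The main obstacle I anticipate is the bookkeeping in the inductive step: one must verify that the cascade genuinely respects the floor $\ell$ (no $\R^{k}$ with $k<\ell$ is ever produced) and that the lexicographic measure $(n-\ell,\,p_{\ell})$ strictly decreases through each growth-and-contract cycle, guaranteeing termination. With that in hand, the remainder is elementary polynomial arithmetic in the two-degree window at the top.
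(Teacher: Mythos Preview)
Your proposal is correct and uses the same grow-then-contract mechanism as the paper: expand $\R^{j}\to 2\R^{j}+\R^{j-1}$ from the top to create middle-degree cells, then contract $2\R^{j+1}+\R^{j}\to \R^{j+1}$ to absorb the lowest term. The only organisational difference is that the paper inducts on the top degree $n$ (first compressing the tail to a three-term window $a\R^{n}+b\R^{n-1}+c\R^{n-2}$ via the inductive hypothesis, then draining $c$), whereas you induct on the lowest occupied degree $\ell$ and drain $p_{\ell}$ directly; the two arguments are interchangeable. You also supply the Euler-characteristic argument for the clause ``no further identifications being possible'', which the paper's proof leaves to the surrounding discussion.
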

\begin{proof}
The statement holds trivially for $n=0$. For $n=1$, $a>1$ and $b>0$ we clearly have
$$
a\R+b=(a-2)\R+2\R+(b-1)+1=(a-1)\R+b-1
$$
so we are reduced to either $b=0$ or $a=1.$\\
Next, assuming the property for $n-1$, $n>1$, we may reduce any cell complex to
$$
a\R^n+b\R^{n-1}+c\R^{n-2}.
$$
If $c=0$ we may reason as in the case $n=1$ to arrive at the final form $a\R^n$ or $\R^n+b\R^{n-1}.$ If $c>0$ we may write $a\R^n+b\R^{n-1}+c\R^{n-2}=(a-1)\R^n+\left(2\R^{n}+\R^{n-1}\right)+b\R^{n-1}+c\R^{n-2}=(a+1)\R^{n}+(b+1)\R^{n-1}+c\R^{n-2}$ and repeat this idea until $b>1.$ Then one may reduce using $2\R+1=\R:$ $
a\R^n+b\R^{n-1}+c\R^{n}=a\R^n+(b-1)\R^{n-1}+(c-1)\R^{n-2}$ and so on, until the final form is reached.
\end{proof}
Note that this theorem guarantees us that in the worst case, at least the
$$
\mathrm{dimension}=n
$$
as well as the
$$
\mathrm{Euler\;characteristic}=(-1)^na\;\; \mathrm{or}\;\; (-1)^n+(-1)^{n-1}b
$$
are being preserved during morphological calculations; it is a second approximation for any possible notion of morphological quantity (the first one being just the Euler characteristic).\\
But this calculus is still too poor and to be able to evaluate the quantity of $\R$ we have to ignore the distinction between a line $\R$  and a halfline $\R_+.$ This is again a dilemma, similar the once mentioned in introduction concerning commutativity and use of brackets. There are two options
\begin{enumerate}
\item The ``canonical'' option.\\
Hereby we assume as definition for $\R$ the relation
$$
\R=\R_++1+\R_+=2\R_++1
$$
and consider the identification $\R_+=\R$ as a form of decay. So the relation $\R=2\R+1$ is suspended in what we regard as ``the canonical style''.\\
This style of calculating is on the other hand flexible with respect to commutativity and the use of brackets. Its main purpose is (not exclusively): Morphological analysis: to analyse geometrical objects (surfaces, manifolds) by decomposing them into parts (or other ways) and to express this knowledge in calculus language in order to arrive at morphological definitions.\\
 \item The ``formal'' option.\\
 Hereby we consider morphological calculus as a formed language in which the order of terms in an addition and the use of brackets is not ignored. For the morphological line we have two definitions:
 $$
 \R=\R+1+\R\;\;\mathrm{or}\;\; \R=\R+(\R+1).
 $$
 Its main purpose is (not exclusively):\\
 Morphological synthesis: to construct a geometrical interpretation for an algebraic expression in morphological calculus.\\
 \end{enumerate}
 In this paper we mostly use the canonical style. Our main interest is to study manifolds and try to understand their morphological quantity, whatever that may mean. The formal style will be discussed briefly in last section.
\section{Carthesian Space, Spheres, Projective Spaces}
The carthesian plane is defined as the product $\R^2=\R\cdot \R,$ using the relation $\R=2\R_++1$ we thus arrive at
$$
\R^2=\left(2\R_++1\right)^2=4\R_+^2+4\R_++1,
$$
decomposing the plane into $4$ quadrants $\R_+^2,$ $4$ halfplanes $\R_+$ and one point $1$ (the origin).\\
Similarly the cartesian $n$-space is defined as product
$$
\R^n=\R\cdot \cdots\cdot R=\R\cdot\R^{n-1}
$$
and we have its decomposition into ``octants'':
$$
\R^n=\left(2\R_++1\right)^n=\sum_{j=0}^{n}\dbinom{n}{j}2^j\R_+^j.
$$
To define the sphere $S^{n-1}$ we make the following analysis: for a vector $\underline{x}\in\R^n$ with $\underline{x}\neq0$ we have the polar decomposition $\underline{x}=r\underline{\omega}$, $r=|\underline{x}|\in\R_+$, $\underline{\omega}=\displaystyle\frac{\underline{x}}{|\underline{x}|}\in S^{n-1}$.\\
In morphological calculus language we write
$$
\R^n-1=S^{n-1}\R_+,
$$
leading to the morphological definition of $S^{n-1}$:
$$
S^{n-1}=\frac{\R^n-1}{\R_+}.
$$
Now from $\R=2\R_++1$ we obtain that
$$
\R_+=\frac{\R-1}{2},
$$
a line without a point indeed gives $2$ halflines.\\
Hence we obtain a quantity formula for $S^{n-1}$:
$$
S^{n-1}=2\frac{\R^n-1}{\R-1}=2\R^{n-1}+2\R^{n-2}+\cdots+2\R+2.
$$
In particular, a circle is given by \newline
$
S^{1}=2\R+2\;:\:
$
two semi-circles and two points and a $2$-sphere is given by
$$
S^{2}=2\R^2+2\R+2=2\R^2+S^1,
$$
two hemi-spheres and a circle (equator). Also \newline
$S^2=S^1\R+2,$\newline
a ``cylinder $S^1\R$'' and two poles ``2''.\\
Using here $\R=2\R_++1$ we obtain
$$
S^{2}=2\left(2\R_++1\right)^2+2(2\R_++1)+2=8\R_+^2+12\R_++6,
$$
which may be interpreted as an octahedron whereby\newline $\R_+^2$ translates as a triangle,\newline $\R_+$ translates as a quarter circle or short interval.\\
Other regular polyhedra are harder to obtain, yet they are obtainable by transformations of the form  $\R=2\R+1,$ $\R^2=2\R^2+\R$, which as we know are questionable. In fact, every cell complex $a\R^2+b\R+c$ that corresponds to an embedded connected $2$-manifold in $\R^3$ has Euler characteristic $a-b+c=2(1-g)$, $g$ being the genus or number of holes, a number which characterises the manifold. Hence, for $2-$manifolds the relation $\R=2\R+1$ is not such a destructive deformation.\\
However, this also means that e.g. a dodecahedron will be identified with $12\R^2+30\R+20$ and hence a solid pentagon is identified with a square $\R^2$, an identification which is only topologically true.\\
For general spheres we have the recursion formula\newline
$S^{n-1}=2\R^{n-1}+S^{n-2}$: two hemi-spheres and an equator,\newline
as well as the ``polar coordinate'' formula\newline
$S^{n-1}=S^{n-2}\R+2$: a cylinder and $2$ poles.\\
These formula are special cases of the following general method for introducing polar coordinates on $S^{n-1}$.\\
Let $\underline{\omega}\in S^{n-1}$ and consider the decomposition $\R^n=\R^p\times \R^q$, $p+q=n$. Then we may write
$$
\underline{\omega}=\cos\theta\underline{\omega}_1+\sin\theta \underline{\omega}_2,\;\;\theta=\left[0,\frac{\pi}{2}\right], \underline{\omega}_1\in S^{p-1},\;\;\underline{\omega}_2\in S^{q-1}.
$$
There are three cases:
$$
\theta=0:\;\underline{\omega}=\underline{\omega}_1\in S^{p-1},\;\;\theta=\frac{\pi}{2}:\;\underline{\omega}=\underline{\omega}_2\in S^{q-1},
$$
$$
\theta\in\left]0,\frac{\pi}{2}\right[:\;\underline{\omega}=\cos\theta\underline{\omega}_1+\sin\theta\underline{\omega}_2\;\sim(\underline{\omega}_1,\underline{\omega}_2)\in S^{p-1}\times S^{q-1}.
$$
In morphological calculus this situation is expressed as follows:
$\R^n-1=\left(\R^p-1\right)\left(\R^q-1\right)+\left(\R^p-1\right)+\left(\R^q-1\right)$ or
$$
S^{n-1}\R^+=\left(S^{p-1}\R_+\right)\left(S^{q-1}\R_+\right)+\left(S^{p-1}\R_+\right)+\left(S^{q-1}\R_+\right)
$$
leading to the addition formula for spheres:
$$
S^{n-1}=S^{p-1}\cdot S^{q-1}\cdot \R_+ + S^{p-1} + S^{q-1}.
$$
Notice that also here $\R_+$ is interpreted as the quarter circle (small interval) $\theta\in\left]0,\frac{\pi}{2}\right[$, while the full line $\R$ would rather correspond to a semi-circle $\theta\in\left]0,\pi\right[$.\\
The addition formula also leads to:
\begin{gather*}
S^{n-1}=S^{p-1}\left(S^{q-1}\R_+ + 1\right)+S^{q-1}\\
=S^{p-1}\R^q+S^{q-1},
\end{gather*}
which generalises the recursion formula and the ``polar coordinate'' formula mentioned earlier.\\
Of particular interest is the odd-dimensional sphere $S^{2n-1}$ where we can take $p=q=n$.\\
This leads to the ``Hopf factorization formula''\newline
$$
S^{2n-1}=S^{n-1}\R^n+S^{n-1}\;\;\mathrm{or}\;\;S^{2n-1}=\left(\R^{n}+ 1\right)S^{n-1}.
$$
In particular  we have the Hopf fibrations
$$
S^3=S^2S^1,\;\;S^7=S^4S^3
$$
that are well known and follow from complex resp. quaternionic projective geometry. They can be seen as interpretations of the Hopf factorization
$$
S^3=\left(\R^2+1\right)S^1,\;\;S^7=\left(\R^4+1\right)S^3,
$$
whereby the spheres $S^2$ resp. $S^4$ are identified with $\left(\R^2+1\right)$ resp. $\left(\R^4+1\right)$. But of course the  Hopf fibrations are by no means proved or even implied by morphological calculus. \\
In general, the sphere $S^{n-1}$ can be mapped onto $\R^{n-1}$ by stereographic projection. Hereby one takes line from the southpole $\underline{w}=(0,\ldots,0,-1)$ to general point $\underline{w}$, denoted by $L(\underline{w})$ and the stereographic projection $\mathrm{st}(\underline{w})$ is the intersection of $L(\underline{w})$ with plane $x_n=1$ (the tangent plane to the north pole $(0,\ldots,0,+1)$).\\
This leads to the identification between $S^{n-1}$ and $\R^{n-1}\cup \{\infty\}$. In morphological calculus one might hence think of identification $S^{n-1}=\R^{n-1}+1$. But that would lead to the unwanted identification
$$
2\R^2+2\R+2=\R^2+1,
$$
that would again correspond to $\R=2\R+1$ via:
$$
\R^2+1=(2\R+1)\R+1=2\R^2+\R+1=2\R^2+(2\R+1)+1=2\R^2+2\R+2.
$$
In morphological calculus we introduce a kind of stereographic sphere or ``Poincar\'e sphere'' by $$\BS^n=\R^n+1,$$ leading to the recursion formula
$$
\BS^n=(2\R_+ + 1)\R^{n-1}+1=2\R^{n-1}\R_+ + \BS^{n-1}
$$
and leading to the total quantity (whatever that may mean)
$$
\BS^n=2\R^{n-1}\R_+ + 2\R^{n-2}\R_+ + \cdots +2\R_+ + 2.
$$
Notice hence that the identification $\R_+ =\R$ would lead to $\BS^n=S^n$ or $S^2=\R^2+1$ (a point and a square is a sphere). It is true that the only $2-$manifold interpretation for $\R^2+1$ is indeed a sphere. Also the Poincar\'e-polynomial of the sphere $S^n$ is given by $t^n+1$, which corresponds to $\R^n+1$.\\
Recall that the Poincar\'e-polynomial of a manifold $M$ is defined as $a_nt^n+\cdots+a_0$ with $a_j=\mathrm{dim}H_j,$ $H_j$ leading the $j-$th homology space of $M$.\\
It turns out that the Poincar\'e-polynomial often appears as the morphological quantity of an object, in particular for $\R^n$ itself and the sphere $\BS^n.$ But for the sphere $S^n$ we obtain a ``higher'' morphological quantity: $2\R^n+\cdots+2\R+2$ that does not correspond to the Poincar\'e-polynomial. The real projective space $\R\BP^n$ corresponds to the set of $1D$ subspaces of $\R^{n+1}$, also defined as the set of vectors $(x_1, \ldots,x_{n+1})\sim(\lambda x_1,\ldots,\lambda x_{n+1}),$ $\lambda\neq0.$\\
In mathematics we write it as the quotient structure
$$
\R\BP^n=\frac{\R^{n+1}\backslash \{0\}}{\R\backslash\{0\}}.
$$
This leads to the morphological definition
$$
\R\BP^n=\frac{\R^{n+1}-1}{\R-1}.
$$
and to the formula for the quantity of $\R\BP^n:$
$$
\R\BP^n=\R^n+\R^{n+1}+\cdots+\R+1.
$$
In this case the quantity polynomial corresponds to the Poincar\'e polynomial for $\R\BP^n:t^n+\cdots+t+1.$ It also leads to the recurson formula
$$
\R\BP^n=\R^n+\R\BP^{n-1}
$$
in which ``$\R^n$'' symbolizes the Affine subspace consisting of the points $(x_1,\ldots,x_n,1)$ while ``$\R\BP^{n-1}$'' stands for the plane at infinity: $x_{n+1}=0.$\\
Of course we also have that
$$
\R\BP^n=\frac{S^n}{S^0}=\frac{S^n}{2}
$$
whereby $S^0$ in the multiplicative group $S^0=\{-1,1\}$.\\
In particular the projective line is given by $$\R\BP^1=\R+1$$ symbolizing $\R\cup\{\infty\}$ and it also represents the Poincar\'e circle
$$
\BS^1=\R+1=S^1/2,
$$
$S^1=2\R+2$ being the standard circle.\\
The projective plane is given by
$$
\R\BP^n=\R^2+\R+1=\R^2+\R\BP^1=(\R+1)\R+1,
$$
whereby the object $(\R+1)\R$ in this context corresponds to a Moebius band.\\
Just seen by itself, $(\R+1)\R$ could correspond to several things, including any line bundle over the circle $\R+1$, i.e., either a cylinder or a Moebius band. In Geometry the Moebius band can be recognised by cutting it in half along the center circle; if it was a cylinder, then the cutted object would give $2$ cylinders and if it was a Moebius band then the cutted object would be a single cylinder. Now, this cutting procedure can be translated into morphological calculus as the subtraction
$$
(\R+1)\R-(\R+1)=(\R+1)(\R-1)=\R^2-1
$$
and $\R^2-1$ symbolizes a plane minus a point but also a single cylinder
$$
\R^2-1=S^1\R_+=(2\R+2)\R_+,
$$
here represented as a product of a circle $2\R+2$ (which has two glueing points and twice the length of the original circle) with a halfline $\R_+$ (stretching from the cutting point $\{0\}$ to the boundary $\{\infty\}$).\\
So this simple calculation symbolizes quite well the whole cutting experiment and it illustrates us the object $(\R+1)\R$ as being a Moebius band. In general, morphological objects are merely organised quantities that can have a number of meanings called morphological synthesis. This synthesis takes place outside the calculus but it can be guided by calculations that give the object an intrinsic meaning. In the Moebius experiment we also see that the circle $2\R+2$ and the Poincar\'e circle $\R+1$ clearly play different roles like also the line $\R$ and the halfline $\R_+$.\\
If we apply a similar experiment to the cylinders
$$
(2\R+2)\R-(2\R+2)=(2\R+2)(\R-1)=2S^1\R_+
$$
we obtain two cylinders. Of course one always calculates in a certain way and that may force a certain interpretation; the language of calculus can be used as an illustration but not as a real proof. In fact the language of calculus also has to remain flexible enough but this flexibility is at the cost of the stability of the morphological-synthesis. For example we have
$$
\R^2-1=S^1\R_+=(2\R+2)\R_+=2(\R+1)\R_+=2\BS^1\R_+
$$
showing that distributivity results in the cutting and reglueing of one cylinder $S^1\R_+$ into two cylinders $\BS^1\R_+$, half the size and with one single cutting edge $\R_+.$\\
For the general projective space we have a kind of ``Moebius factorization''
$$
\R\BP^n=\R\BP^{n-1}\cdot\R+1
$$
whereby the Moebius cutting experiment is represented  as
$$
\R\BP^{n-1}\cdot\R-\R\BP^{n-1}=\R\BP^{n-1}(\R-1)=\R^n-1=S^{n-1}\R_+,
$$
also a kind of cylinder.\\
We now turn to complex projective spaces.\\
The complex numbers $\C$ are morphologically given by $$\C=\R^2$$ and this is all. Anything concerning $\sqrt{-1}=i$ exists outside the calculus. We also have that
$$
\C^n=\left(\R^2\right)^n=\R^{2n}.
$$
Complex projective space $\C\BP^n$ is defined as the set of equivalence classes of relation
$$
(z_1,\ldots,z_{n+1})\in \C^{n+1}\setminus\{0\}\sim (\lambda z_1,\ldots,\lambda z_{n+1}), \;\; \lambda \in \C\setminus\{0\}
$$
i.e. the quotient structure
$$
\C\BP^n=\C^{n+1}\setminus\{0\}/\C\setminus\{0\}.
$$
Hence, in morphological calculus we have the definition
$$
\C\BP^n=\frac{\C^{n+1}-1}{\C-1}
$$
which immediately leads to the quantity
$$
\C\BP^n=\C^{n}+\C^{n+1}+\cdots+1=\R^{2n}+\R^{2n-2}+\cdots+1
$$
that also corresponds to Poicar\'e polynomial. The Euler number of $\C\BP^n$ equals $n$.\\
We also have that in real terms:
$$
\C\BP^{n-1}=\frac{\R^{2n}-1}{\R^2-1}=\frac{S^{2n-1}}{S^1},
$$
leading to the $\C\BP^n-$factorization of $S^{2n-1}$
$$
S^{2n-1}=\C\BP^{n-1}\cdot S^1,
$$
which is the fibration obtained from the group structure $(z_1,\ldots,z_{n})\in S^{2n-1}\rightarrow\left(e^{i\theta}z_1,\ldots,e^{i\theta}z_n\right)$.\\
In particular we have that
$$
\C\BP^1=\C+1=\R^2+1=\BS^2
$$
and the above fibration leads to the first Hopf-fibration
$$
S^3=\BS^2\cdot S^1.
$$
Like in the real case one has  the recursion formula
$$
\C\BP^n=\C^n+\C\BP^{n-1}
$$
and also the Moebius factorization
$$
\C\BP^n=\C\BP^{n-1}\cdot\C+1.
$$
Hereby the complex line bundle $\C\BP^{n-1}\cdot\C$ reduces for $n=2$ to
$$
\C\BP^2=\BS^2\cdot\C=(\C+1)\C
$$
and it is a non-trivial plane bundle over the $2-$sphere.\\
In fact also here we have ``Moebius cutting experiment''
$$
\C\BP^{1}\cdot\C-\C\BP^1=(\C+1)(\C-1))=\C^2-1=\R^4-1=S^3\R_+.
$$
showing that fibration $\BS^2(\C-1)$ is non trivial: $S^3\R_+$.

This remains true in general:
$$
\C\BP^{n-1}\cdot\C-\C\BP^{n-1}=\C\BP^{n-1}(\C-1)=\C^n-1=\R^{2n}-1=S^{2n-1}\cdot\R_+.
$$
The above may be repeated for the quaternions; we present the morphological headlines:\\
We have
\begin{gather*}
\BH=\R^4,\quad\BH^n=(\R^4)^n=\R^{4n},\\
\BH \BP^n=\frac{\BH^{n+1}-1}{\BH-1}=\BH^n+\BH^{n-1}+\cdots+1=\R^{4n}+\R^{4n-4}+\cdots+1\\
=\BH^{n}+\BH\BP^{n-1}.
\end{gather*}
Also
$$
\BH\BP^{n-1}=\frac{\R^{4n}-1}{\R^{4}-1}=\frac{S^{4n-1}}{S^{3}}
$$
leading to the $\BH\BP^n$-factorization (fibration)
$$
S^{4n-1}=\BH\BP^{n-1}\cdot S^3,
$$
which in particular for $n=2$ leads to the second Hopf-fibration
$$
S^7=\BH\BP^1\cdot S^3=(\BH+1)S^3=(\R^4+1)S^3=\BS^4S^3.
$$
The Moebius factorization is given by
$$
\BH\BP^n-1=\BH\BP^{n-1}\cdot\BH
$$
while we also have the Moebius cutting experiment:
$$
\BH\BP^{n-1}\cdot\BH-\BH\BP^{n-1}=\BH\BP^{n-1}\cdot(\BH-1)=\BH^n-1=\R^{4n}-1=S^{4n-1}\cdot\R_+.
$$
But not every interesting quotient in calculus leads to a morphological synthesis that produces a nice manifold. Yet these quotients are also interesting because they say a lot about the meaning of morphological calculus and we call them ``phantom geometrical objects''.
\begin{ex}$\R\BP_h^{2n}$\\
$$
\R\BP_h^{2n}=\frac{\R^{2n+1}+1}{\R+1}=\R^{2n}-\R^{2n-1}+\cdots+\R+1,
$$
which we call the Phantom (real) projective space of dimension $2n.$
\end{ex}
The simplest case is $$\R\BP_h^{2}=\R^2-\R+1,$$ with Euler characteristic $3$. This would be one too high for a connected $2-$manifold and $\R^2-\R+1$ corresponds to: take plane $\R^2,$ delete line $\R$ and add point $1$; it makes sense as a weird object but not as a 2-manifold.\\
In fact one could say
$$
\R^2-\R+1=(2\R_++1)\R-\R+1=2\R_+\R+1,
$$
two halfplanes (or half-discs or triangles) glued together by a single point (a butterfly).\\
Note that we also have that
$$
\R\BP_h^{2}=\frac{\BS^3}{\BS^1}.
$$

If we would now use $\R=2\R+1$ we could make the identification $\BS^3=S^2,$ $\BS^1=S^1$ and arrive at
$$
\frac{\BS^3}{\BS^1}=\frac{S^3}{S^1}=\BS^2=\R^2+1\;\; (Hopf\;fibration)
$$
and therefore
$$
\R^2-\R+1=\R^2+1.
$$
This is total nonsense because this identification is even wrong on the level of Euler numbers : $3=2.$\\

The reason why such bad identification happens is because the Euler numbers of $\BS^3, S^3, \BS^1, S^1$ are all equal to zero, so, on the level of Euler numbers:
$$
\frac{S^3}{S^1}=\frac{0}{0}\;\; \mathrm{\&}\;\;\frac{\BS^3}{\BS^1}=\frac{0}{0},
$$
so one would not even be allowed to consider the quotients $S^3/S^1,$ $\BS^3/\BS^1$. But that would also exclude $\C\BP^n$ from the picture as well as the Hopf fibration, an unpermitable exclusion. This is a sound reason why the relations $\R=2\R+1$ or $\R_+=\R$ or $\BS^n=S^n$ must be forbidden: they simply spoil the calculus.\\
The general Phantom projective space
$$
\R\BP_h^{2n}=\R^{2n}-\R^{2n-1}+\cdots-\R+1
$$
surely makes sense as a geometrical object, but the corresponding quantity $\R^{2n}-\R^{2n-1}+\cdots-\R+1$ still has negative numbers as coefficients, so it is not yet fully evaluated. This can be done by replacing $\R=2\R_+ + 1$ at suitable places, giving rise to
$$
\R^{2n}-\R^{2n-1}+\cdots-\R+1=2\R_+\R^{2n-1}+2\R_+\R^{2n-3}+\cdots+2\R_+\R+1,
$$
which also provides a synthesis for $\R\BP_h^{2n}$. Comparing $\R^{2n}-\R^{2n-1}+\cdots-\R+1$ with Poincar\'e polynomial also suggests that some of the homology spaces of $\R\BP_h^{2n}$ would have negative dimension. But we also have that phenomena with the object
$$
\R^2-1=(\R-1)(\R+1)=2\R_+\R+2\R_+.
$$
Quantity simply doesn't always have a positive evaluation as an addition of powers $\R^s$. This leads to
\begin{defn}
A morphological object is called integrable if it has an evaluation of the form
$$
\F=a_0\R^{n}+a_1\R^{n-1}+\cdots+a_n,\;a_0\in\N,\; a_1,\ldots,a_n\in\N\cup\{0\};
$$
this polynomial is then called the ``total quantity'' or ``integral''. The object $\F$ is called semi-integrable if it has an evaluation as an addition of terms of the form $\R^j_+\R^k.$ Such expression is not unique unless we require the power ``$j$'' of $\R_+$ to be minimal, in which case the obtained expression is also called ``total quantity'' or ``integral''.\\

Note that not every object is semi-integrable; for example $F=\R-2$ is an object and hence it has certain hidden quantity, but it cannot be evaluated as an addition in terms of $\R$ and $\R_+.$ One option would be to introduce new type of line, e.g.
$$
\R_+=2\R_{++} + 1,
$$
but that would not lead to be more interesting calculus.
\end{defn}
Notice that the Phantom projective plane can also be interpreted as the result of the cutting experiment
\begin{eqnarray*}
\R\BP_h^{2n}&=& \R^{2n}-\R^{2n-1}+\cdots-\R+1\\
           &=& \left(\R^{2n}+\R^{2n-2}+\cdots+1\right)-\left(\R^{2n-2}+\cdots+1\right)\R\\
           &=& \C\BP^{n}-\C\BP^{n-1}\cdot\R.
\end{eqnarray*}
We also have the Phantom Moebius strip
$$
\R\BP_h^{2n}-1=(\R-1)\C\BP^{n-1}\cdot\R
$$
and this time we have a Moebius ``glueing-experiment''
$$
(\R-1)\C\BP^{n-1}\cdot\R+(\R-1)\C\BP^{n-1}=\C\BP^{n-1}(\R^2-1)
$$
$$
=\R^{2n}-1=S^{2n-1}\cdot\R_+,
$$
the same cylinder as we had earlier on.\\
Of course one may also consider complex and quaternionic phantom projective spaces:
$$
\C\BP_h^{2n}=\frac{\C^{2n+1}+1}{\C+1}=\C^{2n}-\C^{2n-1}+\cdots-\C+1=\cdots,
$$
$$
\BH\BP_h^{2n}=\frac{\BH^{2n+1}+1}{\BH+1}=\BH^{2n}-\BH^{2n-1}+\cdots-\BH+1=\cdots
$$
The fact that the corresponding synthesis for phantom projective spaces does not add up to a
manifold implies that these quotients do not correspond to a group action (or else the quotients would be homogeneous spaces). Indeed, the denominators in the definition of the projective spaces are the multiplicative groups $\R-1, \C-1, \BH-1$ while for the phantom spaces we have the spheres $\BS^1=\R+1,$ $\BS^2=\C+1,$ $\BS^3=\BH+1$ which are non-groups leading to non-group actions. In fact group actions can not be recognised within morphological calculus itself, only by the outside interpretations. The consideration of phantom geometry also leads to the next definition.
\begin{defn}
A morphological object is said to be of ``integer type'' if it has an evaluation of the form
$$
\F=a_0\R^{n}+a_1\R^{n-1}+\cdots+a_n,\;a_0\in\N,\; a_1,\ldots,a_n\in\Z.
$$
A semi-integrable object that is not of integer type is to said to be of ``half integer type''. Other objects are ``just another type''.\\
\end{defn}
Notice that $\R-2$ is of integer type but not semi-integrable while the building blocks $\R_+^j\R^k,$ $j>0$ are semi integrable but not integer type: they are half-integer type.\\
The cylinder $(2\R+2)\R_+=\R^2-1$ is clearly of integer type but only semi-integrable while the small cylinder $(\R+1)\R_+$ is only of half-integer type. This example confirms that it is a good idea to keep two circles $S^1=2\R+2,$ $\BS^1=\R+1$ in use rather than deciding that $2\R+2=2(\R+1)$ is always a pair of circles. Note that the object $\R_+ - 1$ is just another type while $1-\R$ isn't even an object. So we have a kind of hierarchy that is quantity based.
\begin{ex}
Phantom fibrations\\
We already discussed Hopf factorization
$$
S^{2n-1}=(\R^n+1)S^{n-1}
$$
which only for $n=2$ and $n=4$ leads to a true fibration: the Hopf-fibrations
$$
S^3=\BS^2S^1,\;\;S^7=\BS^4S^3.
$$
\end{ex}
These fibrations in fact correspond to projective geometry and the factors $S^1$ and $S^3$ are group actions.\\
In the other cases like e.g.
$$
S^5=\BS^3S^2
$$
we don't have this. However also the product
$$
\BS^3S^2=(\R^3+1)S^2=\R^3S^2+S^2
$$
does lead to a synthesis of $S^5$ and it is like a fibration still, but an irregular fibration that would not locally correspond to a Cartesian product, whence the name ``phantom fibration''.\\
For the spheres $S^{2^n-1}$ we also have repeated factorizations
$$
S^7=\left(\R^4+1\right)S^3=\left(\R^4+1\right)\left(\R^2+1\right)\left(\R+1\right)2,
$$
$$
S^{15}=\left(\R^8+1\right)\left(\R^4+1\right)\left(\R^2+1\right)\left(\R+1\right)2,
$$
and so on. If we apply non-associativity we get
$$
S^7=\left(\left(\R^4+1\right)\left(\R^2+1\right)\right)S^1=\C\BP^3S^1,
$$
$$
S^7=\left(\R^4+1\right)\left(\left(\R^2+1\right)\left(\R+1\right)2\right)=\left(\R^4+1\right)S^3=\BS^4S^3,
$$
two fibrations of $S^7$ that follow from complex and quaternion geometry and that are unrelated.\\
There are also more Hopf factorizations, the simplest one being
$$
S^8=\left(\R^6+\R^3+1\right)S^2.
$$
In general they follow from products of the form
$$
\left(\R^{s\cdot k}+\R^{(s-1)\cdot k}+\cdots +\R^k+1\right)\left(2\R^{k-1}+2\R^{k-2}+\cdots +2\right)
$$
leading to
$$
S^{(s+1)k-1}=\left(\R^{s\cdot k}+\cdots +\R^k+1\right)S^{k-1}
$$
and they play a crucial role in the ``Gra\ss mann division problem''.\\
Needless to say that there are repeated factorizations of this type.\\
Also the addition formula for spheres may be generalised. \\
For $p+q+r=m$ we have
\begin{gather*}
\R^m-1=\left(\R^p-1\right)\left(\R^q-1\right)\left(\R^r-1\right)+\left(\R^p-1\right)\left(\R^q-1\right)\\
+\left(\R^p-1\right)\left(\R^r-1\right)+\left(\R^q-1\right)\left(\R^r-1\right)+\left(\R^p-1\right)+\left(\R^q-1\right)+\left(\R^r-1\right),
\end{gather*}
from which we obtain:
\begin{gather*}
S^{m-1}=S^{p-1}S^{q-1}S^{r-1}\R_+^2 + S^{p-1}S^{q-1}\R_+ + S^{p-1}S^{r-1}\R_+ + S^{q-1}S^{r-1}\R_+\\
 + S^{p-1} +S^{q-1} +S^{r-1}.
\end{gather*}
Needless to say also that our list of interesting manifolds and geometries is far from complete.

Let's take the Klein bottle as an example, we have the following morphological analysis. A Klein bottle can be obtained from a Moebius band by properly glueing a circle to the edge, thus closing it up into a compact $2$-manifold. As we know, a Moebius band may be obtained by removing a point from $\R\BP^2:\R\BP^2-1.$ Then one blow up the hole to a small disc and one glues a circle $S^1=2\R+2$ to that, giving $2$-manifold with boundary. Finally one identifies every point on this $S^1$ with its anti-podal point: $S^1/\Z_2$ which leads to a continuation across the boundary and to the Klein bottle. In morphological language we have:
\begin{gather*}
\left(\R\BP^2-1\right) + S^1/\Z_2 = \left(\R\BP^2-1\right)+\left(\R +1\right)\\
=\left(\left(\R^2+\R+1\right)-1\right)+\left(\R+1\right)\\
=\left(\R^2+\R\right)+\left(\R+1\right)\\
=\left(\R+1\right)\R+\left(\R+1\right)=\left(\R+1\right)\left(\R+1\right),
\end{gather*}
so we end up with a circle $\BS^1$-bundle over $\BS^1$. But $\BS^1\cdot \BS^1$ may also simply represent a torus: there is no way one can tell from the quantity $(\R+1)^2$ alone whether this represents a torus or a Klein-bottle. Only in the initial formula $\left(\R\BP^2-1\right)+\left(\R+1\right)$ one can specify a Klein bottle but as one starts calculating, this specification is lost.\\
Higher dimensional Klein bottles may be introduced as the ``blow up'' experiment:
\begin{gather*}
\left(\R\BP^n-1\right) + S^{n-1}/\Z_2 = \left(\R\BP^n-1\right)+\R\BP^{n-1}\\
=\R\BP^{n-1}\cdot\R+\R\BP^{n-1}=\R\BP^{n-1}\cdot\BS^1,
\end{gather*}
an $\BS^1$-bundle over $\R\BP^{n-1}$.\\
Similarly, complex and quaternionic Klein-bottles may be introduced as (exercise) the ``blow up experiment'':
\begin{gather*}
\left(\C\BP^n-1\right) + S^{2n-1}/S^1 = \C\BP^{n-1}\cdot\left(\C+1\right),\\
\left(\BH\BP^n-1\right) + S^{4n-1}/S^3 = \BH\BP^{n-1}\cdot\left(\BH+1\right).
\end{gather*}
To summarise this section, we notice that there is no one to one correspondence between morphological calculus and geometry. This may be seen as a drawback but it is also a stronghold because it means that there exists another perspective that reveals a hidden aspect of geometry: the quantity of an object.

\section{Groups and Homogeneous Spaces}
Groups enter morphological calculus via a proper morphological analysis; the group structure will be lost and the organised quantity remains.\\
We begin with the groups $O(n)$, $SO(n)$, $GL(n,\R)$, $GL(n,\R)$, $SL(n,\R)$.\\
The orthogonal group $O(n)$ is the group of all orthogonal matrices $(a_{ij})$. If we represent such a matrix as a row $(\b{a}_1,\ldots,\b{a}_n)$ of column vectors it simply means that $\b{a}_1,\ldots,\b{a}_n$ are orthogonal unit vectors. This means that one can start off by choosing
$$
\b{a}_1\in S^{n-1}
$$
followed by choosing
$$
\b{a}_2\in S^{n-1}\cap \{\lambda\b{a}_1,\;\lambda \in\R\}^\bot=S^{n-2}
$$
and then
$$
\b{a}_3\in S^{n-1}\cap \{\lambda_1\b{a}_1+\lambda_2\b{a}_2,\;\lambda_j \in\R\}^\bot=S^{n-3}
$$
and so on, until for $\b{a}_n$ there are just $2$ choices
$$
\b{a}_n\in S^{n-1}\cap \mathrm{span}\{\b{a}_1,\ldots,\b{a}_{n-1}\}^\bot=S^{0}.
$$
This immediately leads to the morphological definition
$$
O(n)=S^{n-1}\cdot S^{n-2}\cdots S^0,
$$
as well as to the recursion formula
$$
O(n)=S^{n-1}\cdot O(n-1),\;\; O(0)=1.
$$
For the group $SO(n)$ everything remains the same except that for the last vector $\b{a}_n$ there is just one choice, determined by $\mathrm{det}(a_{ij})=1$ condition.\\
We thus have the definition
$$
SO(n)=S^{n-1}S^{n-2}\cdots S^{1}=O(n)/\Z_2.
$$
Clearly $O(n), SO(n)$ are integrable and the integral is obtained by substituting $S^{j-1}=2\R^{j-1}+\cdots+2$  and working out the product.\\
The general linear group $GL(n,\R)$ is obtained similarly by writing matrix $(a_{ij})$ as $(\b{a}_1,\ldots,\b{a}_n)$ whereby
\begin{eqnarray*}
\b{a}_1 &\in & \R^n\setminus\{0\},\\
\b{a}_2 &\in & \R^n\setminus\mathrm{span}\{\b{a}_1\},\\
 \vdots & & \\
 \b{a}_n &\in & \R^n\setminus\mathrm{span}\{\b{a}_1,\ldots,\b{a}_{n-1}\}
\end{eqnarray*}
which leads to the morphological definition
$$
GL(n,\R)=\left(\R^n-1\right)\left(\R^n-\R\right)\cdots\left(\R^n-\R^{n-1}\right).
$$
We readily obtain the quotient formula
\begin{eqnarray*}
\frac{GL(n,\R)}{O(n)} & = & \frac{\left(\R^n-1\right)\left(\R^n-\R\right)\cdots\left(\R^n-\R^{n-1}\right)}{S^{n-1}\cdot S^{n-2}\cdots S^{0}}\\
                      & = & \left(\frac{\R^n-1}{S^{n-1}}\right)\left(\frac{\R^n-\R}{S^{n-2}}\right)\cdots\frac{\left(\R^n-\R^{n-1}\right)}{S^0}\\
                      & = & \R_+\cdot \left(\R\cdot\R_+\right)\cdots\left(\R^{n-1}\cdot\R_+\right),
\end{eqnarray*}
which symbolizes the GRAMM-SCHMIDT orthogonalization procedure. Here we applied commutativity of the product but that doesn't matter too much; in fact one can also write
$$
GL(n,\R)=\left(S^{n-1}\R_+\right)\left(\R S^{n-2}\R_+\right)\cdots\left(\R^{n-1}S^0\R_+\right).
$$
For the group $SL(n,\R)$ we have the extra condition $\mathrm{det}\left(a_{ij}\right)=1,$ which readily leads to
$$
SL(n,\R) = \frac{GL(n,\R)}{\R-1},
$$
and so also
$$
\frac{SL(n,\R)}{SO(n)}=\R_+\cdot \left(\R\cdot\R_+\right)\cdots\left(\R^{n-2}\cdot\R_+\right)\R^{n-1}.
$$
Now let us look some homogeneous spaces.\\

The Stiefel manifold $V_{n,k}(\R)$ is by definition the manifold of orthonormal $k$-frames $(\b{v}_1,\ldots, \b{v}_k)$ in $\R^n$. We hence have that for $k<n$:
$$
V_{n,k}(\R)=\frac{SO(n)}{SO(n-k)} = S^{n-1}\cdots S^{n-k}= \frac{O(n)}{O(n-k)}.
$$

The Stiefel manifold $V_{n,k}(\R)$ is the manifold of $k$-frames $(\b{v}_1,\ldots, \b{v}_k)$ that are linearly independent and hence span a $k$-plane. We have for $k<n$:
$$
VL_{n,k}(\R)=\frac{GL(n,\R)}{\R^{n-k}\cdot GL(n-k,\R)} = \left(\R^{n}-1\right)\left(\R^{n}-\R\right)\cdots \left(\R^{n}-\R^{k-1}\right).
$$
The Gra\ss mann manifold $G_{n,k}(\R)$ is the manifold of $k$-dimensional subspaces of $\R^n.$ Now, each $k$-dimensional subspace has an orthogonal frame and that can be chosen in $O(k)$ in different ways. This leads to the combinatorial formula:
$$
G_{n,k}(\R)=\frac{VL_{n,k}(\R)}{O(k)} = \frac{O(n)}{O(k)\cdot O(n-k)}=\frac{S^{n-1}\cdot S^{n-2}\cdots S^{n-k}}{S^{k-1}\cdots S^{0}}.
$$
The Gra\ss mann manifold may also be constructed starting from the general linear group:
$$
G_{n,k}(\R)=\frac{V_{n,k}(\R)}{GL(k,\R)} =  \frac{\left(\R^{n}-1\right)\left(\R^{n}-\R\right)\cdots \left(\R^{n}-\R^{k-1}\right)}{\left(\R^{k}-1\right)\cdots \left(\R^{k}-\R^{k-1}\right)}
$$
and the equivalence of both definitions readily follows from the Gramm-Schmidt factorization.\\

By $\widetilde{G_{n,k}}(\R)$ we denote the manifold of all ORIENTED $k$-dimensional subspaces of $\R^n,$ i.e.,
$$
\widetilde{G_{n,k}}(\R) = \frac{V_{n,k}(\R)}{SO(k)} = \frac{SO(n)}{SO(k)\cdot SO(n-k)} = \frac{S^{n-1}\cdot S^{n-2}\cdots S^{n-k}}{S^{k-1}\cdots S^{1}}.
$$
Now, for the Stiefel manifolds everything is clear, but for the Gra\ss mann manifolds we have one major problem.
\begin{problem}
Gra\ss mann division problem\\
Can one work out the polynomial division $\frac{S^{n-1}\cdot S^{n-2}\cdots S^{n-k}}{S^{k-1}\cdots S^{0}}$, and does it result in an integral (polynomial in ``$\R$'' with natural number coefficients).
\end{problem}
To solve the problem we will work with the quotient $VL_{n,k}(\R)/GL(k,\R)$ that is equivalent and easier to work with. For the case of simplicity take $k=3$. Every $3D$-subspace $V$ of $\R^n$ is spanned by $3$ linearly independent vectors $\b{v}_1, \b{v}_2, \b{v}_3$ that may be chosen in $GL(3,\R)$ different ways. For each $V$ there is a unique triple $(\b{v}_1, \b{v}_2, \b{v}_3)$ that may be written as a matrix of the form
\[
\begin{bmatrix}
  \b{v}_1 \\
  \b{v}_2 \\
  \b{v}_3
 \end{bmatrix}\!\!=\!\!
\setcounter{MaxMatrixCols}{20}
 \begin{bmatrix}
  c_{11} & \!\!\cdots\!\! & c_{1j_3} & 0 & c_{1j_3+2} & \!\!\cdots\!\!  & c_{1j_2}  & 0  &  c_{1{j_2}+2} & \cdots & c_{1j_1} & 1  &  0 & \!\!\cdots\!\! & 0\\
  c_{21} & \!\!\cdots\!\! & c_{2j_3} & 0 & c_{2j_3+2} & \!\!\cdots\!\!   & c_{2j_2} & 1  &  0            & \cdots & 0        & 0  &  0 & \!\!\cdots\!\! & 0\\
 c_{31}  & \!\!\cdots\!\! & c_{3j_3} & 1 &     0        & \!\!\cdots\!\!   & 0        & 0  &  0            & \cdots & 0        & 0  &  0 & \!\!\cdots\!\! & 0
 \end{bmatrix}
 \]
 and any other frame in $V$ may be obtained  by a unique $GL(3,\R)$-action from this, so in fact the division is carried out by looking to matrices of the above special form. As the coefficients $c_{ij}$ vary the matrices of the above form constitute a cell of $G_{n,3}(\R)$ that is a copy of a certain $\R^j$ and it is called a Schubert cell. We thus have proved the following results.
\begin{thm}Schubert cells\\
The object $G_{n,k}(\R)=\R^d+c_1\R^{d-1}+\cdots+c_d$ whereby $c:j\in\N$ is the number of Schubert cells of dimension $d-j.$
\end{thm}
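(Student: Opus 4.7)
The plan is to extend the explicit $k=3$ construction given just above the theorem to arbitrary $k$, and then read off the Schubert-cell decomposition from the morphological division $VL_{n,k}(\R)/GL(k,\R)$.

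First I would show that every $k$-dimensional subspace $V\subset\R^n$ admits a unique ordered basis $(\b{v}_1,\ldots,\b{v}_k)$ whose matrix is in reduced row-echelon form: there are pivot columns $1\le j_k<j_{k-1}<\cdots<j_1\le n$ such that $\b{v}_i$ has a $1$ in column $j_i$, zeros in the other pivot columns and in every column to the right of $j_i$, while the entries strictly to the left of $j_i$ (and outside the other pivot columns) are arbitrary real numbers. Uniqueness is a standard consequence of Gauss elimination: any frame spanning $V$ can be brought to this form by a unique element of $GL(k,\R)$ acting on the left, which is precisely the division performed morphologically in the definition $G_{n,k}(\R)=VL_{n,k}(\R)/GL(k,\R)$.

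Next, for a fixed pivot tuple $J=(j_1,\ldots,j_k)$ I would count the number of free entries. Row $i$ has $j_i-1$ positions to the left of its pivot, of which exactly $k-i$ are occupied by the pivots of rows $i+1,\ldots,k$; so row $i$ contributes $j_i-1-(k-i)=j_i-k+i-1$ free parameters. Summing gives the dimension of the Schubert cell attached to $J$, namely
\begin{equation*}
\dim C_J=\sum_{i=1}^{k}(j_i-k+i-1).
\end{equation*}
This cell is morphologically a copy of $\R^{\dim C_J}$. The maximum is reached at the generic pivot tuple $(n,n-1,\ldots,n-k+1)$, which yields $d=k(n-k)$ and equals $\dim G_{n,k}(\R)$.

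Finally, since the reduced form is unique, $G_{n,k}(\R)$ decomposes as a disjoint union over all valid $J$, which is exactly what the morphological quotient produces: writing $G_{n,k}(\R)=\sum_{J}\R^{\dim C_J}$ and grouping by codimension gives $G_{n,k}(\R)=\R^d+c_1\R^{d-1}+\cdots+c_d$, where $c_j$ equals the number of pivot tuples $J$ with $\dim C_J=d-j$, i.e.\ the number of Schubert cells of dimension $d-j$. The main delicate point — really the only non-cosmetic step — is justifying that the morphological polynomial division $VL_{n,k}(\R)/GL(k,\R)$ honestly assembles these cells as a \emph{sum} of integer powers of $\R$ rather than producing spurious negative or fractional terms; this is guaranteed because every orbit of $GL(k,\R)$ on $VL_{n,k}(\R)$ has a unique reduced representative, so the quotient is tiled by the cells $\R^{\dim C_J}$ without overlap or leftover.
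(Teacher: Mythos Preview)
Your proposal is correct and follows essentially the same route as the paper: the paper's argument is precisely the reduced-row-echelon (Schubert cell) construction sketched for $k=3$ just before the theorem, and you have simply carried this out for general $k$ with the explicit dimension count $\dim C_J=\sum_i(j_i-k+i-1)$ and $d=k(n-k)$ made explicit. The paper gives no further proof beyond that sketch, so your write-up is a faithful (and more detailed) completion of it.
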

Apart from this there are typical morphological questions such as:
\begin{description}
\item Q1: To decompose $G_{n,k}(\R)=O_1\cdots O_s$ as a (e.g. maximal) product of morphological objects $O_j$ of integer type (that are irreducible e.g.).
\item Q2: To look for $G_{n,k}(\R)$-factorization $O_1\cdots O_t$ in terms of objects $O_j$ that are integrable.
\end{description}
Let us consider a few examples of such Gra\ss mann factorizations.\\
Of course we readily have $G_{n,k}(\R)=\R\BP^{n-1}$ and the Hopf factorizations provide further ways of factorizing this.\\
Next for $k=2$ we have:
\begin{eqnarray*}
G_{2n,2}(\R)    & = & \frac{S^{2n-1}\cdot S^{2n-2}}{S^1\cdot S^0} = \C\BP^{n-1}\cdot\R\BP^{2n-2}\\
G_{2n+1,2}(\R)  & = & \frac{S^{2n}\cdot S^{2n-1}}{S^0\cdot S^1} = \R\BP^{2n}\cdot\C\BP^{n-1},
\end{eqnarray*}
showing a clear $2-$periodicity.\\

For $k=3$ the first interesting case is
$$
G_{6,3}(\R)=\frac{S^5\cdot S^4\cdot S^3}{S^2\cdot S^1\cdot S^0},
$$
which, using the Hoft factorizations
$$
S^5=\left(\R^{3}+1\right)S^2 = \BS^3S^2, \;\;S^3 = \left(\R^{2}+1\right)S^1 = \BS^2S^1
$$
may be evaluated as
$$
G_{6,3}(\R) = \left(\R^{3}+1\right)\cdot\R\BP^4\cdot\left(\R^{2}+1\right) = \R\BP^4\cdot \BS^3\cdot\BS^2.
$$
Note here that it is forbidden to divide $\BS^2/S^2=1.$\\
More interesting still is the next case
$$
G_{7,3}(\R)=\frac{S^6\cdot S^5\cdot S^4}{S^2\cdot S^1\cdot S^0},
$$
which, using the Hopf-factorization $S^5=(\R^3+1)S^2$ yields.
$$
G_{7,3}(\R) = \frac{\R\BP^6\cdot\left(\R^{3}+1\right)\R\BP^4}{(\R+1)}.
$$
Now $\R\BP^4$ and $\R\BP^6$ cannot be divided by $(\R+1)$; in fact these objects are irreducible in morphological sense. So, the division that works here is:
$$
\R\BP_h^2 = \frac{\R^3+1}{\R+1}=\R^2-\R+1,
$$
the phantom projective plane, leading to the following maximal factorization
$$
G_{7,3}(\R) = \R\BP^6\cdot\R\BP^4\cdot\R\BP^2_h
$$
in terms of irreducible objects of integer type.\\
But now the factors are no longer integrable, which also shows that the integrability of Gra\ss mann manifolds is in fact not so trivial. But we have:
\begin{eqnarray*}
\frac{\left(\R^3+1\right)}{\R+1}\R\BP^4 &=& \frac{\left(\R^3+1\right)}{\R+1}\left(\R^2\frac{\left(\R^3-1\right)}{\R-1}+(\R+1)\right)\\
                                       &=& \R^2\frac{\left(\R^6-1\right)}{\R^2-1}+\left(\R^3+1\right)  = \C\BP^2\cdot\R^2+\BS^3,
\end{eqnarray*}
so that in fact we have integrable factorization
$$
G_{7,3}=\R\BP^6\cdot\left(\C\BP^2\cdot\R^2+\BS^3\right).
$$
The next case is again simpler:
$$
G_{8,3}(\R)=\frac{S^7\cdot S^6\cdot S^5}{S^2\cdot S^1\cdot S^0} = \left(\C\BP^3\cdot\R\BP^6\right)\left(\R^3+1\right).
$$
For the next case
$$
G_{9,3}(\R)=\frac{S^8\cdot S^7\cdot S^6}{S^2\cdot S^1\cdot S^0},
$$
we have to use the next Hopf factorization
$$
S^8=\left(\R^6+\R^3+1\right)S^2,
$$
which gives us:
$$
G_{9,3}(\R)=\left(\R^6+\R^3+1\right)\C\BP^3\cdot\R\BP^6.
$$
The next cases are:
$$
G_{10,3}(\R)=\frac{S^9\cdot S^8\cdot S^7}{S^2\cdot S^1\cdot S^0} = \C\BP^4\cdot\left(\R^6+\R^3+1\right)\cdot\R\BP^7,
$$
the first appearance of an odd dimensional $\R\BP^n,$ and
$$
G_{11,3}(\R)=\frac{S^{10}\cdot S^9\cdot S^8}{S^2\cdot S^1\cdot S^0} = \R\BP^{10}\cdot\C\BP^4\cdot\left(\R^6+\R^3+1\right).
$$
In the next case we again have $2$ odd spheres and the Hopf factorization
$$
S^{11} = \left(\R^6+1\right)\left(\R^3+1\right)S^2 = \BS^6\cdot\BS^3\cdot S^2,
$$
giving rise to
$$
G_{12,3}(\R)=\frac{S^{11}\cdot S^{10}\cdot S^9}{S^2\cdot S^1\cdot S^0} = \R\BP^{10}\cdot\C\BP^4\cdot\BS^6\cdot\BS^3.
$$
and finally in the next case we again have two irreducible spheres $S^{12}, S^{10}$, leading to
$$
G_{13,3}(\R)=\frac{S^{12}\cdot S^{11}\cdot S^{10}}{S^2\cdot S^1\cdot S^0} = \R\BP^{12}\cdot\R\BP^{10}\cdot\BS^6\frac{\left(\R^3+1\right)}{\R+1}.
$$
where once again, the phantom projective plane appears
$$
\frac{\R^3+1}{\R+1} = \R^2 - \R + 1 = \R\BP^2_h.
$$
There is clearly a $6$-periodicity in the factorization of Gra\ss mann manifolds for $k=3.$ The formulas obtained here lead to a classification but they do not correspond to the fibre bundles of any kind. Besides, we used repeatedly the fact that quantity is commutative. Another interesting homogeneous space is Flag Manifold $F_{n; k, \ell}(\R)$, $k<\ell<n$ whereby $W$ is subspace of $\R^n$ of dimension $1$ and $V\subset W$ is a subspace of dimension $k$. This clearly leads to the fibration
\begin{eqnarray*}
F_{n; k, \ell}(\R) &=& G_{n, \ell}(\R)\cdot G_{\ell, k}(\R)\\
                   &=& \frac{O(n)}{O(n-\ell)O(\ell)}\cdot\frac{O(\ell)}{O(\ell-k)\cdot O(k)}=\frac{O(n)}{O(k)O(\ell-k)\cdot O(n-\ell)}.
\end{eqnarray*}
The flag manifold $F_{n; k, \ell}(\R)$ may also be seen as manifold $(V,V')$ with $V\subset\R^n$ a subspace of dimension $k$ and $V\perp V'$ of dimension $\ell-k.$ The link with the previous definition simply follows from $W=V\oplus V'$ and we have the fibration
\begin{eqnarray*}
F_{n; k, \ell}(\R) &=& G_{n, k}(\R)\cdot G_{n-k,\ell-k}(\R)\\
                   &=& \frac{O(n)}{O(k)O(n-k)}\cdot\frac{O(n-k)}{O(\ell-k)\cdot O(n-\ell)}=\frac{O(n)}{O(k)O(\ell-k)\cdot O(n-\ell)}.
\end{eqnarray*}
More in general for $0<k_1<\ldots k_s<n$ we may define the flag manifold $F_{n; k_1,\ldots,k_s}(\R)$ as the manifold of flags $(V_1\ldots,V_s)$ with $V_1\subset V_2\cdots\subset V_s\subset\R^n$ subspaces of dimension $\mathrm{dim}V_j=k_j.$ We clearly have the iterated fibration
\begin{align*}
F_{n; k_1,\ldots,k_s}(\R) = G_{n, k_s}(\R)\cdot G_{k_s,k_{s-1}}(\R)\cdots G_{k_2,k_1}(\R)&&\\
                  =\frac{O(n)}{O(n-k_s)O(k_s)}\cdot\frac{O(k_s)}{O(k_s-k_{s-1})\cdot O(k_{s-1})}\cdots\frac{O(k_2)}{O(k_2-k_1)\cdot O(k_1)}&& \\
                  =\frac{O(n)}{O(n-k_s)O(k_s-k_{s-1})\cdots O(k_2-k_1)O(k_1)}&&.
\end{align*}
Using orthogonal subspaces, we have:
$$
F_{n; k_1,\ldots,k_s}(\R) = G_{n, k_1}(\R)\cdot G_{n-k_1,k_2-k_1}(\R)\cdots G_{n-k_{s-1},k_s-k_{s-1}}(\R).
$$
Orthogonal groups may also be defined for the spaces $\R^{p,q}$ with pseudo-Euclidean inner product
$$
\langle x,y\rangle=x_1y_1 + \cdots + x_py_q - x_{p+1}y_{p+1} - \cdots - x_{p+q}y_{p+q}.
$$
The corresponding groups are $O(p,q)$ and $SO(p,q)$. The group $SO(p,q)$ e.g. is determined as the manifold of frames of signature $(p,q):$
$$
\left(\b{v}_1,\ldots, \b{v}_p; \b{v}_{p+1},\ldots,\b{v}_{p+q}\right)
$$
whereby $\b{v}_1\in S^{p-1}\cdot\R^q$ is the first spacelike vector $\b{v}_2\perp\b{v}_1\in S^{p-2}\cdot\R^q$
up to $\b{v}_p\perp\mathrm{span}\left(\b{v}_1,\ldots,\b{v}_{p-1}\right)\in S^{0}\cdot\R^q$
and the remaining vectors $\left(\b{v}_{p+1},\ldots,\b{v}_{p+q}\right)$ form a right oriented time-like $q$-frame, i.e. $\b{v}_{p+1}\in S^{q-1}$, $\b{v}_{p+2}\in S^{q-2}$ and $\b{v}_{p+q}$ is fixed by the fact that the determinant of the whole frame equals $+1.$

In total, the morphological bill adds up to:
\begin{eqnarray*}
SO(p,q) &=& \left(S^{p-1}\cdot\R^q\right)\cdots\left(S^0\cdot\R^q\right)S^{q-1}\cdots S^1\\
                   &=& O(p)\cdot SO(q)\cdot\R^{p\cdot q}.
\end{eqnarray*}
and it is a two component group still.

All of the above may be generalized to the complex Hermitian case. Let us start with $\C^n$ provided with the Hermitian inner product:
$$
(\b{z},\b{w})=z_1\overline{w}_1+ \cdots + z_n\overline{w}_n.
$$
Then by $U(n)$ we denote the unitary group of matrices learning the Hermitian form invariant; its matrices may be written as Hermitian orthonormal frames $\b{v}_1,\ldots,\b{v}_n$ whereby $|\b{v}_j|=1,$ $(\b{v}_j,\b{v}_k)=0$ for $j\neq k$.\\
This leads to the following morphological analysis:\newline
$\b{v}_1\in S^{2n-1}$ is the unit vector in $\C^n=\R^{2n}$,\newline
$\b{v}_2\perp\b{v}_1$ in hermitian sense, i.e. $\b{v}_2\in\b{v}_1^\perp\cap S^{2n-1}=S^{2n-3}$\newline up to\\
$\b{v}_n\perp\b{v}_1,\ldots,\b{v}_{n-1}$, i.e. $\b{v}_n\in S^1$\newline
and, therefore,\newline
$$U(n)=S^{2n-1}\cdot S^{2n-3}\cdots S^{1}.$$
In the above, please note that $\langle\b{v},\b{w}\rangle=\mathrm{Re}(\b{v},\b{w})$ is the orthogonal inner product in $\R^{2n}$ and so \newline
$(\b{v},\b{w})=0$ iff $\langle\b{v},\b{w}\rangle=0$ and $\langle i\b{v},\b{w}\rangle=0.$

Clearly $U(n)$is a subgroup of $SO(2n)$ and for the quotient we have:
$$
\frac{SO(2n)}{U(n)}=S^{2n-2}\dot S^{2n-4}\cdots S^{2},
$$
which actually is a manifold, namely the manifold of all complex structures on $\mathbb{R}^{2n}$ (Exercise).

The special unitary group $SU(n)$ is the subgroup of matrices in $U(n)$ with determinant $=1$, i.e.,
$$
SU(n)=S^{2n-1}\cdots S^{3},
$$
and in particular $SU(2)=S^3.$

The definition of the complex general and special linear groups is obvious; they are denoted by $GL(n,\mathbb{C})$, $SL(n,\mathbb{C})$. Like for the orthogonal groups also for the complex group $U(n)$ we have the associated homogeneous spaces, in particular Gra\ss mann manifolds
$$
G_{n,k}(\mathbb{C})=\frac{U(n)}{U(k)\cdot U(n-k)}=\frac{S^{2n-1}\cdots S^{2n-2k+1}}{S^{2k-1}\cdot S^{2k-3}\cdots S^{1}},
$$
so for example
$$
G_{4,2}(\mathbb{C})=\frac{S^{7}\cdot S^{5}}{S^{3}\cdot S^{1}}=(\mathbb{R}^4+1)\mathbb{C}\mathbb{P}^2=\mathbb{S}^4\mathbb{C}\mathbb{P}^2=\mathbb{H}\mathbb{P}^1\cdot \mathbb{C}\mathbb{P}^2.
$$

$$
G_{5,2}(\mathbb{C})=\frac{S^{9}\cdot S^7}{S^{3}\cdot S^{1}}=\mathbb{C}\mathbb{P}^4\cdot(\mathbb{R}^4+1)=\mathbb{C}\mathbb{P}^4\cdot\mathbb{H}\mathbb{P}^1.
$$

$$
G_{6,2}(\mathbb{C})=\frac{S^{11}\cdot S^{9}}{S^{3}\cdot S^{1}}=\mathbb{H}\mathbb{P}^2\cdot\mathbb{C}\mathbb{P}^4.
$$
and so we have again a clear $2$-periodicity.

We leave the discussion of $G_{n,3}(\mathbb{C})$ as an exercise.

Unitary groups may also be constructed over spaces $\C^{p,q}$ with pseudo-Hermitian form
$$
(\underline{z},\underline{w})=z_1\overline{w}_1+\cdots+z_p\overline{w}_p - z_{p+1}\overline{w}_{p+1}-\cdots - z_{p+q}\overline{w}_{p+q}
$$
and the corresponding invariance groups are denoted by $U(p,q)$ and $SU(p,q)$ in case $\det =1$.

The corresponding frames now have to be chosen on the pseudo-Hermitian unit sphere:

$$
|z_1|^2+\cdots + |z_p|^2 - |z_{p+1}|^2 - \cdots -|z_{p+q}|^2 =1
$$
which leads to the morphological formula
$$
U(p,q)=(S^{2p-1}\cdot\mathbb{C}^q)\cdot(S^{2p-3}\cdot\mathbb{C}^q)\cdots(S^{1}\cdot\mathbb{C}^q)\cdot S^{2q-1}\cdot S^{2q-3}\cdots S^1.
$$
Of course we also have the complexified versions $O(n,\C)$ and $SO(n,\C)$ of $O(n)$ and $SO(n)$; it is another story which we'll leave out for the moment.

To finish the list of matrix groups leading to morphological analysis, we mention the compact symplectic groups $Sp(n)$; they follow from the quaternionic Hermitian form
$$
(q,\underline{r})=q_1\overline{r}_1+\cdots+q_n\overline{r}_n,
$$
whereby $q_j=q_{j_0}+iq_{j_1}+jq_{j_2}+kq_{j_3}$ is a quaternion and $\overline{q}_j=q_{j_0}-iq_{j_1}-jq_{j_2}-kq_{j_3}$ its quaternion conjugate.

$Sp(n)$ is by definition the goup of quaternion $n\times n$ matrices leaving this form invariant and its matrix elements may be regarded as quaternionic frames $q_1,\ldots,q_n$ whereby $q_r\in\mathbb{H}^n$ with $|q_1|=1$, i.e., $q_1\in S^{4n-1},$ $q_2\in\mathbb{H}^n$ with $|q_2|=1$ and $(q_1,q_2)=0$, i.e., $q_2\in S^{4n-5},$ and so on. This leads to the morphological bill:
$$
Sp(n)=S^{4n-1}\cdot S^{4n-5}\cdot S^{3},
$$
in particular $Sp(1)=S^3$ and $Sp(2)=S^7\cdot S^3.$

Also here may be investigated quaternionic Gra\ss mannians.

The groups $Sp(n)$ should not be confused with the non-compact groups $Sp(2n,\R)$ of matrices $A\in GL(2n,\R)$ leaving the maximal $2-$form invariant.

For $Sp(2n,\R)$ we did not find a morphological evaluation yet.

To finish this section we discuss the Spin groups $Spin(m)$.

We start by considering the real $2^m-$dimensional Clifford algebra $\R_m$ with generators $e_1,\ldots,e_m$ and relations $e_j\,e_k+e_k\,e_j=-2\delta_{jk}.$

The space of bivectors
$$
\R_{m,2}=\left\{\sum_{i,j}b_{ij}e_i\,e_j:\;b_{ij}\in \R\right\}
$$
forms a Lie algebra for the commutation product and the corresponding group is the Spin group:
$$
Spin(m)=exp(\R_{m,2}).
$$
Its elements may be written into the form $s=\underline{w}_1\cdots\underline{w}_{2s}$,
$\underline{w}_j=\sum w_{jk}e_k\in \R^m$ with $\underline{w}_j^2=-1$, i.e., $\underline{w}_j\in S^{m-1}$.

We have the following $Spin(m)$ representation
$$
h:Spin(m)\rightarrow SO(m)
$$
whereby
$$
h:s\rightarrow h(s):\; \underline{x}\rightarrow s \underline{x} \overline{s}
$$
whereby for $a\in\R_m$, $\overline{a}$ is the conjugation with properties $\overline{ab}=\overline{b}\,\overline{a}\; \& \;\overline{e_j}=-e_j.$

In this way $Spin(m)$ is a $2-$fold covering group of $SO(m)$, i.e.,
$$
SO(m)=Spin(m)/_{\mathbb{Z}_2}
$$
and also $Spin(m)$ is simply connected.

This might suggest the morphological evaluation
$$
Spin(m)=SO(m)\cdot \mathbb{Z}_2=S^{m-1}\cdots S^{1}\cdot S^{0}=O(m),
$$
which, through not entirely wrong in the sense of quantity, is somewhat uninteresting.

But there is a more interesting evaluation of $Spin(m)$.

Let us start with
\begin{align*}
\mathrm{Spin(3)}=&\{q_0 + q_1e_{23} + q_2e_{31} + q_3e_{12}\,:\,q\overline{q}=1\}\\
=& S^3=\mathbb{S}^2\cdot S^1=SU(2)=Sp(1)
\end{align*}
with differs rather substantially from
$$
O(3)=S^{2}\cdot S^{1}\cdot S^{0}.
$$

So in fact, the rotation group $SO(3)$ has two different representations in morphological calculus:

one as the matrix group
$$
\mathbb{S}O(3)=S^{2}\cdot S^{1}
$$
and one in terms of the Spin group (quaternion $S^3$):
\begin{align*}
\mathbb{S}O(3)=&\mathrm{Spin}(3)/_{\mathbb{Z}_2}=S^3/_{2}\\
=&\mathbb{S}^2 S^1/_{2}=\mathbb{S}^2\mathbb{S}^1=(\R^2+1)(\R+1)=\R\mathbb{P}^3.
\end{align*}

In general we got
$$
\mathbb{S}O(m)=\mathrm{Spin}(m)/_{2},
$$
which is another morphological version of the rotation group.

For $m=4$ we consider the pseudoscalar $e_{1234}$ with $e_{1234}^2=+1$ and $e_{1234}$ is central in the even subalgebra
$$
\R^{+}_4=\mathrm{Alg}\{e_{jk}\,:\,j<k\}\cong\R_3\cong\mathbb{H}\oplus\mathbb{H}.
$$
Putting
$$
E_{\pm}=\frac{1}{2}\left(1\pm e_{1234}\right)
$$
we have
$$
E_{+}+E_{-}=1,\;\;E_{\pm}^2=E_{\pm},\;\;E_{+}E_{-}=0,
$$
so every $a\in\R^{+}_4$ may be written uniquely as:
$$
a=a_+E_++a_-E_-,\;\;a_{\pm}\in\mathbb{H}=\mathrm{span}\{1,e_{23},e_{31},e_{12}\}
$$
and in particular
$$
s\in \mathrm{Spin}(4):s_+E_++s_-E_-,\; s_{\pm}\in S^3.
$$
So we have the morphological analysis

\begin{align*}
\mathrm{Spin}(4)=&\mathrm{Spin}(3)\times\mathrm{Spin}(3)\\
=&S^3\cdot S^3=S^3\cdot \mathbb{S}^2\cdot S^1.
\end{align*}
For $m=5$, we use the fact that
\begin{align*}
\mathrm{Spin}(5)=&\{s\in \R^{+}_5\,:\,s\overline{s}=1\},
\end{align*}
together with the isomorphisms
$$
\R^{+}_5=\R_4\cong\mathbb{H}(2)
$$
i.e., the set of $2\times 2$ quaternions matrices
$$
a=\begin{pmatrix}
a_{11} & a_{12} \\
a_{21} & a_{22}
\end{pmatrix},\;a_{ij}\in\mathbb{H}
$$
and under this isormorphism we also have
$$
\overline{a} =\begin{pmatrix}
\overline{a}_{11} & \overline{a}_{21} \\
\overline{a}_{12} & \overline{a}_{22}
\end{pmatrix}.
$$
This shows that in fact
$$
\mathrm{Spin}(5)=\mathrm{Sp}(2)=S^7\cdot S^3=\mathbb{S}^4\cdot S^3\cdot S^3=\mathbb{S}^4\cdot S^3\cdot \mathbb{S}^2\cdot S^1.
$$

For $m=6$, the pseudoscalar $e_{123456}$ satisfies $e_{123456}^2=-1$ and it is central in even subalgebra $\R_6^+=\R_5$, so it may identified with complex number $i$, leading to
$$
\R_6^+\cong \mathbb{C}\otimes\R_5^+\cong\mathbb{C}\otimes\mathbb{H}(2)\cong\mathbb{C}(4),
$$
and under this map $\R_6^+\rightarrow \mathbb{C}(4)$, the conjugate $\overline{a}$ of $a\in\R_6^+$ corresponds to the Hermitian conjugate $(a)^+$ of matrix $(a)\in \mathbb{C}(4)$.

Hence the group $G=\{a:\,a\overline{a}=1,\; a\in\R_6^+\}$ corresponds to $U(4).$ But for $m>5$, the group $G$ no longer corresponds to $\mathrm{Spin}(m)$ and for $m=6$
\begin{align*}
G =& \exp\left\{\sum b_{ij}e_{ij}+e_{123456}\right\}\\
  =& \exp\left\{\sum b_{ij}e_{ij}\right\}\times \exp\left\{e_{123456}\right\}=\mathrm{Spin}(6)\times U(1)
\end{align*}
which shows that really
$$
\mathrm{Spin}(6)=SU(4)= S^7\cdot S^5\cdot S^3 = S^5\cdot \mathbb{S}^4\cdot S^3\cdot \mathbb{S}^2\cdot S^1.
$$
For $m=7$ on the situation is much more complicated. Could it be that
$$
\mathrm{Spin}(7) = \mathbb{S}^6\cdot S^5\cdot \mathbb{S}^4\cdot S^3\cdot \mathbb{S}^2\cdot S^1?
$$
\section{Nullcones and Things}
The nullcone $NC^{n-1}$ of complex dimension $n-1$ in the locus of points $(z_1,\ldots,z_n)\in\C^n$ that satisfy $z_1^2+\ldots+z_n^2=0.$\\
The complex $(n-1)$-sphere $\C S^{n-1}$ consists of the solutions $(z_1,\ldots,z_n)$ of the equation $z_1^2+\ldots+z_n^2=1$.\\
It is non-compact manifold that admits a canonical compactification $\overline{\C S}^{n-1}\subset\C\BP^n$ given by the equation in homogeneous coordinates $z_1,\ldots, z_{n+1}:$
$$
z_1^2+\cdots+z_n^2=z_{n+1}^2
$$
that is equivalent to $z_1^2+\cdots+z_n^2+z_{n+1}^2=0$ if we replace $z_{n+1}\rightarrow \mathbf{i}z_{n+1}.$ The submanifold $\C S^{n-1}$ corresponds to the intersection with the region $z_{n+1}\neq0$ while the ``points at infinity'' corresponds to the intersection with plane $z_{n+1}=0,$ leading to:
$$
\overline{\C S}^{n-2}\;:\;z_1^2+\cdots+z_n^2=0.
$$
Hence we have the disjoint union
$$
\overline{\C S}^{n-1}=\C S^{n-1}\cup\overline{\C S}^{n-2}.
$$
We are going to perform the morphological calculus of those objects in two different ways, leading to two different formulas for the quantity (once again). The first method could be called the real geometry approach.\\
Let us write $z=\underline{x}+\mathbf{i}\underline{y}$, $\underline{x}=(x_1,\ldots,x_n)$, $\underline{y}=(y_1,\ldots,y_n)\in\R^n;$\\
then the equation for $NC^{n-1}$ may be rewritten as
$$
|\underline{x}|^2 = |\underline{y}|^2\;\;\&\;\;\langle\underline{x},\underline{y}\rangle=0
$$
with $|\underline{x}|^2 = x_1^2+\cdots+x_n^2,$ $\langle\underline{x},\underline{y}\rangle=x_1y_1+\cdots+x_ny_n.$ \\
First solution is the point $\underline{z} = 0$ with quantity $1$.\\
For $\underline{z} \neq 0$ we may write $\underline{x}=\rho\underline{\omega}$, $\underline{y}=\rho\underline{\nu}$, $\rho\in\R_+$ and $\underline{\omega}, \underline{\nu}\in S^{n-1}$ such that $\underline{\omega}\perp\underline{\nu}$, i.e., $(\underline{\omega},\underline{\nu})\in V_{n,2}(\R)$.
Hence we have
$$
NC^{n-1} = 1 + V_{n,2}(\R)\cdot\R_+ = 1 + S^{n-1}\cdot S^{n-2}\cdot\R_+.
$$
The complex sphere $\C S^{n-1}$ written in real coordinates would lead to:
$$
|\underline{x}|^2 = 1 + |\underline{y}|^2, \;\; \langle\underline{x},\underline{y}\rangle=0.
$$
First we have the case $|\underline{y}| = 0$, $|\underline{x}|=1$ leading to the quantity $S^{n-1}.$ Next for $|\underline{y}|\in \R_+$ we again may put $\underline{x}=r\underline{\omega}$, $y=\rho\underline{\nu}$ whereby $r^2=1+\rho^2,$ $\rho\in\R_+$ and $\underline{\omega}, \underline{\nu} \in S^{n-1}$ with $\underline{\omega}\perp\underline{\nu}$. This leads to the morphological bill
\begin{eqnarray*}
\C S^{n-1} & = & S^{n-1} + V_{n,2}(\R)\cdot\R_+ \\
           & = & S^{n-1} + S^{n-1}\cdot S^{n-2}\cdot\R_+ = S^{n-1}\cdot\left(1 + S^{n-2}\cdot\R_+\right)\\
           & = & S^{n-1}\cdot\left(1 + \left(\R^{n-1}-1\right)\right) = S^{n-1}\cdot\R^{n-1},
\end{eqnarray*}
which represents the tangent bundle to $S^{n-1}$. Once again remark that $S^{n-1}\cdot\R^{n-1}$ might represent any $(n-1)$-dimensional vector bundles over $S^{n-1}$ or more general stuff, so it only represents the quantity of the tangent bundle.\\
For $\overline{\C S}^{n-1}$ we have two approaches. First it is the set of points $(z_1,\ldots,z_{n+1})\in\C\BP^n$ solving the equation $z_1^2+\cdots+z_{n+1}^2=0$, which means that the homogeneous coordinates $(z_1,\ldots,z_{n+1})\neq 0$ belong to $NC^n\setminus\{0\}$ and they are determined up to a homogeneity factor $\lambda\in\C\setminus\{0\}.$ This leads to
\begin{gather*}
\overline{\C S}^{n-1} = \frac{NC^n-1}{\C-1} = \frac{V_{n+1,2}(\R)\cdot\R_+}{S^1\cdot\R_+}=\frac{V_{n+1,2}(\R)}{S^1}\\
= \widetilde{G}_{n+1,2}(\R)=\frac{S^n\cdot S^{n-1}}{S^1}.
\end{gather*}
Secondly we also have that
\begin{gather*}
\overline{\C S}^{n-1} = \C S^{n-1} + \overline{\C S}^{n-2}\\
S^{n-1}\cdot\R^{n-1} + S^{n-2}\cdot\R^{n-2} +\cdots + S^{1}\cdot\R^{} + 2
\end{gather*}
giving the total quantity, while also
\begin{gather*}
\C S^{n-1} + \overline{\C S}^{n-2} = S^{n-1}\cdot\left( \R^{n-1} + \frac{S^{n-2}}{S^1}\right)\\
S^{n-1}\cdot\frac{ \left((2\R+2)\R^{n-1}+S^{n-2} \right)}{S^1} = S^{n-1}\cdot\frac{S^n}{S^1},
\end{gather*}
as expected.\\
Note also that there is a 2-periodicity expressed by
$$
\overline{\C S}^{2n-1} = \frac{S^{2n}\cdot S^{2n-1}}{S^1} = S^{2n}\cdot\C\BP^{n-1},
$$
$$
\overline{\C S}^{2n} = \frac{S^{2n+1}\cdot S^{2n}}{S^1} = \C\BP^{n}\cdot S^{2n}.
$$
Note that we also have the identity
$$
NC^{n-1} = 1 + \overline{\C S}^{n-2}\cdot(\C-1)
$$
that often turns out useful in calculations.\\
We now use a purely complex method to compute the complexified sphere; we use a different  notation $\overline{\C \BS}^{n}$.\\
For $\overline{\C \BS}^{0}$ we have the equation
$$
z_1^2 + z_2^2 = 0 \;\;\Leftrightarrow\;\;uv=0,\;\; u=z_1+\mathbf{i}z_2, \; v=z_1-\mathbf{i}z_2.
$$
Up to a factor $\lambda\neq0$ there are solutions $(u,v)$ namely $(1,0)$ and $(0,1)$, leading to the quantity $\overline{\C \BS}^{0}=2.$\\
For $\overline{\C \BS}^{1}$ we have the equation
$$
uv=z^2_3
$$
including for $z^2_3=0,$ $uv=0$, i.e., $\overline{\C \BS}^{0}$ and for $z_3\neq0$ we normalise $z_3=1,$ so we have the equation for
$\overline{\C \BS}^{1}\;:\;uv=1,$ i.e., $u\in\C\setminus \{0\}$, $v=1/u$. This leads to
$$
\C\BS^1=\C-1\;\;\&\;\; \overline{\C \BS}^1 = \C\BS^1 + \C\BS^0 = (\C-1)+2 = \C+1
$$
so in fact $\overline{\C \BS}^1 = \C\BP^1 = \BS^2.$ \\
For $\overline{\C \BS}^2$ we again have the splitting
$$
\overline{\C \BS}^2 = \C \BS^2 + \overline{\C \BS}^1
$$
whereby $\overline{\C \BS}^2$ is given by the equation
$$
uv=z_4^2-z_3^2 = 1 - z_3^2,
$$
with normalization $z_4=1.$
There are two cases of this: $z_3^2\neq1,$ giving $z_3\in \C\setminus\{1,-1\}$ and $z_3\in\{+1,-1\}$. So, morphologically, we have a factor
$$
z_3\in\C-2\;\;\; \mathrm{or} \;\;z_3\in 2.
$$
In the case $z_3\in\C-2$ we have the equation
$$
uv=\mathrm{cte}\neq0
$$
to solve, which gives us $u\in\C-1$, $v=\mathrm{cte}/u$, leading to the quantity:
$$
(\C-1)(\C-2).
$$
For $z_3\in 2$ we have equation $uv=0$ to be solved, which gives us $(u,v)=(0,0)$ or $v=0$ and $u\in\C-1$ or $u=0$ and $u\in\C-1.$ So the total quantity is
$$
1 + 2 (\C-1),
$$
with an extra factor $2$, which gives the total
\begin{eqnarray*}
\C \BS^2 & = & (\C-1)(\C-2) + 4(\C-1) + 2\\
& = & (\C-1)(\C+2)  + 2 = \C^2 + \C = (\C+1)\C.
\end{eqnarray*}
Hence, we arrive at
$$
\overline{\C S}^{2} =  (\C+1)\C + (\C+1) = (\C+1)^2 = \BS^2\C\BP^1=\frac{S^3\BS^2}{S^1}.
$$
For $\C \BS^3$ we have the equation
$$
u_1v_1 = 1-u_2v_2
$$
leading to the cases $u_2v_2 = 1$ and $u_2v_2 \neq 1$ for which we have the morphological factors $\C-1$ and $\C^2-\C+1$ (the phantom complex projective plane). In case $1-u_2v_2 = c\neq 0$ the remaining equation $u_1v_1 = c$ yields the factor $\C-1$ while for $1 = u_2v_2$ we have $u_1v_1 = 0$, i.e., $1+2(\C-1).$ In total this gives
\begin{eqnarray*}
\C \BS^3 & = & (\C-1)(\C^2-\C+1) + (1+2(\C-1))(\C-1)\\
& = & (\C-1)(\C^2+\C) = (\C^2 -1)\C,
\end{eqnarray*}
which is also clear from the fact that $u_1v_1 + u_2v_2 = 1$ is basically the equation $ad-bc = 1$ for
$$
SL(2,\C)=\left(\C^2-1\right)\C.
$$
This leads to
\begin{eqnarray*}
\overline{\C \BS}^3 & = & \C \BS^3  + \overline{\C \BS}^2 = \left(\C^2-1\right)\C + \left(\C+1\right)^2\\
& = & (\C+1)\left( (\C-1)\C+\C+1 \right) = (\C+1)(\C^2+1)\\
& = & \frac{\BS^4 S^3}{S^1} = \BS^4\cdot \C \BP^1 = \C \BP^3.
\end{eqnarray*}
For $\C \BS^4$ we have the equation
$$
u_1v_1 + u_2v_2 = 1 - z_5^2
$$
leading to the factors $\C-2$ for $z_5^2\neq1$ and $2$ for $z_5^2=1$.

For $1-z_5^2=c\neq0$ the remaining equation gives the factor $SL(2,\C) = (\C^2-1)\C$ while for $c=0$ we have the equation $u_1v_1 + u_2v_2 = 0$, which is the nullcone
$$
N\C^3 = 1 + \overline{\C \BS}^2\cdot(\C-1) = 1 + (\C^2-1)(\C+1).
$$
In total we get
\begin{eqnarray*}
\C \BS^4 & = & (\C^2-1)\C(\C-2) + 2(\C^2-1)(\C+1)+2\\
& = & (\C^2-1)(\C^2+2) + 2 = \C^4+\C^2 = \C^2(\C^2 +1)
\end{eqnarray*}
so that
\begin{eqnarray*}
\overline{\C \BS}^4 & = & \C \BS^4  + \overline{\C \BS}^3 = \left(\C^2+1\right)\left(\C^2+\C+1\right)\\
& = & \C\BP^2\cdot\BS^4 = \frac{S^5\BS^4 }{S^1}.
\end{eqnarray*}
It seems that in general we will have
\begin{eqnarray*}
\overline{\C \BS}^{2n} & = & \frac{S^{2n+1}}{S^1} \BS^{2n} = \C\BP^n\cdot \BS^{2n},
\end{eqnarray*}
\begin{eqnarray*}
\overline{\C \BS}^{2n-1} & = & \BS^{2n}\frac{S^{2n-1}}{S^1}  = \BS^{2n}\cdot\C\BP^{n-1} = \C\BP^{2n-1}.
\end{eqnarray*}
To prove this recursively we begin with $\C \BS^{2n}$ given by the equation
$$
u_1v_1 + \cdots + u_nv_n = 1 - z_{2n+1}^2.
$$
For the right hand side we have the factor $\C-2$ for $z_{2n+1}^2\neq1$ and the factor $2$ for $z_{2n+1}^2=1$. The equation $c = 1 - z_{2n+1}^2\neq0$ gives the factor $u_1v_1 + \cdots + u_nv_n = c,$ which is in fact $\C\BS^{2n-1}$ while for $c=0$ we have the equation $u_1v_1 + \cdots + u_n v_n = 0,$ which is
$$
N\C^{2n-1} = 1 + \overline{\C \BS}^{2n-2}\cdot(\C - 1).
$$
So in total we have
\begin{eqnarray*}
\C \BS^{2n} & = & \C \BS^{2n-1} \cdot(\C-2) + 2+ 2\overline{\C \BS}^{2n-2}\cdot(\C-1)\\
           & = & \overline{\C \BS}^{2n-1}\cdot(\C-2) - \overline{\C \BS}^{2n-2}\cdot(\C-2) + 2\overline{\C \BS}^{2n-2}\cdot(\C-1) + 2\\
           & = & \overline{\C \BS}^{2n-1}\cdot(\C-2) + \overline{\C \BS}^{2n-2}\cdot\C + 2
\end{eqnarray*}
and, therefore,
$$
\overline{\C \BS}^{2n} = \C \BS^{2n} + \overline{\C\BS}^{2n-1} = \overline{\C \BS}^{2n-1}\cdot(\C-1) + \overline{\C \BS}^{2n-2}\cdot\C + 2.
$$
Using the induction hypothesis $\overline{\C \BS}^{2n-1} = \C\BP^{2n-1}$ and $\overline{\C S}^{2n-2} = \C\BP^{n-1}\cdot\BS^{2n-2}$, this gives rise to
\begin{eqnarray*}
\overline{\C \BS}^{2n} & = & \C^{2n}-1 + (\C^{n-1}+1)(\C^{n}+\C^{n-1} + \cdots + \C) + 2\\
           & = & (\C^{2n} + \C^{2n-1} + \cdots + \C^n) + (\C^{n} + \C^{n-1}+ \cdots +\C+1)\\
           & = & (\C^{n}+1)\C\BP^n = \C\BP^n \cdot \BS^{2n}.
\end{eqnarray*}
For the other case $\overline{\C \BS}^{2n+1}$ we note that $\C\BS^{2n+1}$ is given by the equation $u_1v_1 + \cdots + u_nv_n = 1 - u_{n+1}v_{n+1}$ giving the factor (Phantom projective plane) $\C^2 - \C + 1$ for $c = 1 - u_{n+1}v_{n+1} \neq 0$ and $\C-1$ for $u_{n+1}v_{n+1}=1$. Again for $c\neq0$ we have the equation $u_1v_1 + \cdots + u_nv_n = c\neq0$, leading to the factor $\C\BS^{2n-1}$ and for $c=0$ we get factor $N\C^{2n-1}$ as before. This leads to
\begin{eqnarray*}
\C\BS^{2n+1} & = & \C\BS^{2n-1}\cdot(\C^2-\C+1) + (1 + \overline{\C \BS}^{2n-2}\cdot(\C-1))\cdot(\C-1)\\
            & = & \overline{\C \BS}^{2n-1}\cdot(\C^2-\C+1) - \overline{\C \BS}^{2n-2}\cdot\C + \C -1,
\end{eqnarray*}
which, using the formulae for $\overline{\C \BS}^{2n-1}$ and $\overline{\C S}^{2n-2}$ yields
\begin{eqnarray*}
\C\BS^{2n+1} & = & \C^n\cdot(\C^{n+1}-1)\\
            & = & S^{2n+1}\cdot\R^{2n}\cdot\R_+
\end{eqnarray*}
and so we finally get
\begin{eqnarray*}
\overline{\C \BS}^{2n+1} & = & \C\BS^{2n+1} + \overline{\C \BS}^{2n}\\
            & = & \C^{n}\cdot(\C^{n+1}-1) + \left(\C^{n}+1\right)\left(\C^n + \cdots + 1\right) \\
            & = & \C^{2n+1} + \C^{2n} + \cdots + \C^{n} - \C^{n} + \C^{n}+ \cdots +1 = \C\BP^{2n+1} \\
           & = & \BS^{2n+2}\cdot\C\BP^n = \BS^{2n+2}\cdot\frac{S^{2n+1}}{S^1}.
\end{eqnarray*}
These calculations show a certain consistency in which the Poincar\'e sphere $\BS^{2n}$ and complex projective spaces $\C\BP^n$ play a central role. Also the Phantom complex projective plane $\C^2-\C+1$ reappears here as the set of points $(u,v)\in\C^2$ which lie outside the hyperbola $uv=1;$ it is the new geometric interpretation for strange phantom plane that arises from the morphological analysis.\\
Finally also the ``bipolar plane'' $\C-2$ arises naturally within the discussion. Of course one could always consider $\C-n$, but in morphological calculus we are not interested in generality, only in canonical objects.\\

Our next investigation concerns ``Null Gra\ss mannians''.\\
By $NG_{n,k}(\C)$ we denote the manifold of all $k$-dimensional subspaces of the nullcone $NC^{n-1}$ in $\C^n$. Hence in particular $NG_{n,1}(\C) = \overline{\C S}^{n-2}$. Let us make the morphological analysis; once again there are two ways.

Let $V\subset NC^{n-1}$ be a $k$-dimensional complex subspace spanned by $k$-vectors $\underline{\tau}_1, \ldots,\underline{\tau}_k$. Which are of course linearly independent and satisfy:
\[
\underline{\tau}_j^2 = (\underline{t}_j + \mathbf{i}\underline{s}_j)^2=0,\;\;\mathrm{i.e.,}\;\;\underline{t}_j\perp\underline{s}_j\;\&\;|\underline{t}_j|=|\underline{s}_j|
\]

\[
\langle\underline{\tau}_j,\underline{\tau}_k\rangle = \langle\underline{t}_j,\underline{t}_k\rangle-\langle\underline{s}_j,\underline{s}_k\rangle+\mathbf{i}(\langle\underline{t}_j,\underline{s}_k\rangle + \langle\underline{t}_k,\underline{s}_j\rangle)=0.
\]

Next consider on $\C^n$ the Hermitian inner product $(\underline{z},\underline{w})=\sum_{j=1}^{n}\underline{z}_j\underline{\overline{w}}_j$; then we can normalize vector $\underline{\tau}_1$, i.e., $(\underline{\tau}_1,\underline{\overline{\tau}}_1) = \langle\underline{t}_1,\underline{t}_1\rangle + \langle\underline{s}_1,\underline{s}_1\rangle = 2$, which together with $\underline{t}_1\perp\underline{s}_1\;|\underline{t}_1|=|\underline{s}_1|$ means that the pair $(\underline{t}_1,\underline{s}_1)\in V_{n,2}(\R)$ is the manifold of orthonormal $2$-frames.

Next one may choose $\underline{\tau}_2$ such that $(\underline{\tau}_2,\underline{\tau}_1) = 0$ \& $|\underline{\tau}_2|^2=2$ with $\underline{\tau}_2=\underline{t}_2 + \mathbf{i}\underline{s}_2$. This automatically implies that
$$
\langle\underline{\overline{\tau}}_2,\underline{\tau}_1\rangle = \langle\underline{\tau}_2,\underline{\tau}_1\rangle = 0,\; \mathrm{i.e.,} \;\langle\underline{t}_2,\underline{\tau}_1\rangle = \langle\underline{s}_2,\underline{\tau}_1\rangle =0
$$
so that the pair $(\underline{t}_2,\underline{s}_2)$ is an orthonormal $2$-frame that is also orthogonal to span$_\R\{\underline{t}_1,\underline{s}_1\}$, i.e., $(\underline{t}_1,\underline{s}_1,\underline{t}_2,\underline{s}_2)\in V_{n,4}(\R).$

Continuing the reasoning, we may choose $\underline{t}_j, \underline{s}_j$ in such a way that

$$(\underline{t}_1,\underline{s}_1,\underline{t}_2,\underline{s}_2,\ldots,\underline{t}_k,\underline{s}_k)\in V_{n,2k}(\R) = \frac{SO(n)}{SO(n-2k)};$$
a necessary condition for this is $n\geq 2k$.\\

Now let $(\underline{\tau}'_1,\ldots, \underline{\tau}'_k)$ be another $k$-tuple for which
$$
\mathrm{span}\{\underline{\tau}'_1,\ldots, \underline{\tau}'_k\} = V\; \&\; |\underline{\tau}'_j|^2=2,\; (\underline{\tau}'_j,\underline{\tau}'_k) = 0,\; j\neq k;
$$
then there exists the unique matrix $A\in U(k)$ such that $\underline{\tau}'_j = \sum_{\ell=1}^{k}A_{j\ell}\underline{\tau}_\ell$. Hence we obtain the identity in terms of homogeneous spaces and in morphological sense

$$
NG_{n,k}(\C) = \frac{SO(n)}{U(k)\times SO(n-2k)} = \frac{S^{n-1}\cdot S^{n-2}\cdots S^{n-2k}}{S^{2k-1}\cdot S^{2k-3}\cdots S^{1}}.
$$
So, in the case $n=2m$ is even, we have that
$$
NG_{n,k}(\C) = \frac{S^{2m-1}\cdots S^{2m-2k}}{S^{2k-1}\cdots S^{1}} = G_{m,k}(\C)\cdot S^{2m-2}\cdots S^{2m-2k}
$$
while for $n=2m+1$, odd, we have
$$
NG_{n,k}(\C) = \frac{S^{2m}\cdot S^{2m-1}\cdots S^{2m-2k+1}}{S^{2k-1}\cdots S^{1}} = G_{m,k}(\C)\cdot S^{2m}\cdots S^{2m-2k+2}.
$$
This also implies that $NG_{m,k}(\C)$ is integrable.\\

Another way of calculating the quantity makes use of the complex compact spheres $\overline{\C\BS}^{n-2}$ that were obtained in terms of complex analysis. Using the notation $N\G_{n,k}(\C)$ for the corresponding null Gra\ss mannians we have:
\begin{eqnarray*}
  N\G_{n,1}(\C) & = & \overline{\C\BS}^{n-2},\\
  N\G_{n,2}(\C) & = & \{(\underline{\tau}_1,\underline{\tau}_2)\in \overline{\C\BS}^{n-2}\cdot S^1\times \overline{\C\BS}^{n-4}\cdot S^1\}\,\mathrm{mod}\,U(2)\\
                & = & \frac{\overline{\C\BS}^{n-2}\cdot S^1\cdot \overline{\C\BS}^{n-4}\cdot S^1}{S^3\cdot S^1} = \frac{\overline{\C S}^{n-2}\cdot \overline{\C S}^{n-4}}{\C\BP^1},
\end{eqnarray*}
and in general
\begin{eqnarray*}
  N\G_{n,k}(\C) & = & \frac{\overline{\C S}^{n-2}\cdots \overline{\C S}^{n-2k}}{\C\BP^{k-1}\cdots\C\BP^1}.
\end{eqnarray*}
Hence, in case $n=2m$ we obtain
\begin{eqnarray*}
  N\G_{2m,k}(\C) & = & \frac{\overline{\C S}^{2m-2}\cdots \overline{\C S}^{2m-2k}}{\C\BP^{k-1}\cdots\C\BP^1}\\
                 & = & \frac{\C\BP^{m-1}\cdot\BS^{2m-2}\cdots\C\BP^{m-k}\cdot\BS^{2m-2k}}{\C\BP^{k-1}\cdots\C\BP^1}\\
                 & = & \frac{\C\BP^{m-1}\cdots\C\BP^{m-k}}{\C\BP^{k-1}\cdots\C\BP^1}\BS^{2m-2}\cdots\BS^{2m-2k}\\
                 & = &  G_{m,k} (\C ) \cdot\BS^{2m-2}\cdots\BS^{2m-2k}
\end{eqnarray*}
and similarly for $n=2m+1$ we get
$$
N\G_{2m+1,k}(\C) =  G_{m,k} (\C ) \cdot\BS^{2m}\cdots\BS^{2m-2k+2},
$$
and so these objects are also integrable.
The calculus of nullcones and things can also be done in  real variables. Let $\R^{p,q}$ be the space $\R^{p,q} = \R^{p+q}$ with quadratic form
$$
|\underline{x}|^2 - |\underline{y}|^2 =  \sum_{j=1}^{p}x_j^2 - \sum_{j=1}^{q}y_j^2, \;\; (\underline{x},\underline{y})\in\R^{p,q}.
$$
Then the nullcone $NC^{p,q}$ is the set of solutions $(\underline{x},\underline{y})$ of equation $|\underline{x}|^2 = |\underline{y}|^2$; it contains  of course $(0,0)$ and for $|\underline{x}|\in\R_+$ we have $(\underline{x},\underline{y})=\rho (\underline{\omega},\underline{\nu})$ with $\rho>0$ and $(\underline{x},\underline{y})\in S^{p-1}\times S^{q-1}$. Hence, we have relation
$$
NC^{p,q} = S^{p-1}\cdot S^{q-1}\cdot\R_+ + 1.
$$

By $S^{p-1,q-1}$ we denote the set of $1D$ subspaces of $NC^{p,q}$; it may be represented by the equivalence classes $(\underline{\omega},\underline{\nu})\sim (-\underline{\omega},-\underline{\nu})$, $(\underline{\omega},\underline{\nu})\in S^{p-1}\times S^{q-1}.$ In morphological notation we have:
$$
S^{p-1,q-1} = \frac{NC^{p,q}-1}{\R-1} = \frac{S^{p-1}\cdot S^{q-1}\cdot\R_+}{\R-1} = \frac{S^{p-1}\cdot S^{q-1}}{2} = S^{p-1}\cdot \R\BP^{q-1}.
$$
For example for $q=2$ we may put $\underline{\nu}=(\cos\theta,\sin\theta)$ and $S^{p-1,1}$ may be identified with the equivalent pairs $(\underline{\omega},\cos\theta,\sin\theta)\sim (-\underline{\omega},-\cos\theta,-\sin\theta)$, which is equivalent with the Lie sphere
$$
S^{p-1,1} \cong LS^p = \{e^{\mathbf{i}\theta}\underline{\omega}\,:\,\underline{\omega}\in S^{p-1},\;\theta\in [0,\pi[\}.
$$
But there is also another calculation of this manifold that leads to another quantity $\BS^{p-1,q-1}$ and it corresponds to the ``conformal compactification'' $\overline{\R}^{p-1,q-1}$ of $\R^{p-1,q-1}$. To find this, let $(\underline{x},\underline{y}) = (\underline{x}',x_p;\underline{y}',y_q)$ with $(\underline{x}',\underline{y}')\in \R^{p-1,q-1}$. Then first we may intersect the nullcone with the plane $x_p - y_q = 1$, i.e., we put
$$
x_p = \frac{1}{2}(1-\rho),\;\; y_q = -\frac{1}{2}(1+\rho).
$$
The equation $|\underline{x}|^2 = |\underline{y}|^2$ for the manifold $\BS^{p-1,q-1}$ gives us $\rho = |\underline{x}'|^2 - |\underline{y}'|^2$ so that $(\underline{x}',\underline{y}')\in \R^{p-1,q-1}$ freely and then $(x_p,y_q)$ are fixed. So this part of $\BS^{p-1,q-1}$ is equivalent to $\R^{p+q-2}$. The remaining part of $\BS^{p-1,q-1}$ is represented by the nonzero vectors $\lambda(\underline{x},\underline{y})$, $\lambda\in\R\setminus\{0\}$ for which $x_p=y_q$; there are two cases:
\begin{itemize}
\item If $x_p=y_q\neq0$ we may normalize $x_p = y_q = 1$ and we have $(\underline{x},\underline{y})= (\underline{x}',1,\underline{y}',1)$ together with the equation $|\underline{x}'|^2 - |\underline{y}'|^2 = 0$. So this part of $\BS^{p-1,q-1}$ is equivalent to the modified nullcone
$$
N\C^{p-1,q-1} = 2\BS^{p-2,q-2}\cdot\R_++1.
$$
\item If $x_p=y_q=0$ we have $\lambda(\underline{x}',0,\underline{y}',0)$ with $\lambda\in\R\setminus\{0\}$ and $|\underline{x}'|\neq0$ and $|\underline{x}'|=|\underline{y}'|$, which is the definition of $\BS^{p-2,q-2}$.
\end{itemize}
So the total morphological calculation becomes
\begin{eqnarray*}
\BS^{p-1,q-1} & = & \R^{p+q-2} + \BS^{p-2,q-2} (2\R_++1)+1,\\
             & = & \R^{p+q-2} + \BS^{p-2,q-2}\cdot\R+1,
\end{eqnarray*}
or, in terms of compactification of $\R^{p,q}$:
$$
\overline{\R}^{p,q} = \R^{p+q} + \overline{\R}^{p-1,q-1}\cdot\R+1.
$$
\begin{description}
\item [case 1]: for $q=0$ we simply obtain
$$
\overline{\R}^{p,0} = \overline{\R}^{p} = \R^p + 1 = \BS^p.
$$
\item [case 2]: compactified Minkowski space-time
\begin{eqnarray*}
\overline{\R}^{p,1} & = & \R^{p+1} + \overline{\R}^{p-1,0}\cdot\R+1,\\
                    & = & \R^{p+1} + (\R^{p-1}+1)\cdot\R+1 = (\R^{p}+1)(\R+1)\\
                    & = & \BS^{p}\cdot \R\BP^1.
\end{eqnarray*}
\end{description}
More in general we obtain for $p\geq q$
\begin{eqnarray*}
\overline{\R}^{p,2} & = & \R^{p+2} + \overline{\R}^{p-1,1}\cdot\R+1,\\
                    & = & \R^{p+2} + (\R^p+\R^{p-1}+\R+1)\cdot\R+1 = (\R^{p}+1)(\R+1)\\
                    & = & \R^{p+2} + \R^{p+1} + \R^{p} + \R^{2}+ \R + 1\\
                    & = & (\R^{p} +1)(\R^{2}+ \R + 1) = \BS^p\cdot \R\BP^2,
\end{eqnarray*}
and, continuing in this way we obtain for $p\geq q:$
$$
\overline{\R}^{p,q} = (\R^{p} +1)(\R^q+\cdots+\R^{2}+ \R + 1) = \BS^p\cdot \R\BP^q,
$$
as expected from the similar (but different) formula $S^{p,q} = S^p\cdot\R\BP^q.$

So once again we have two different quantities that are obtained in two different canonical ways from what is mathematically considered to be one manifold.

Note that in particular (and this is weird)
\begin{eqnarray*}
\overline{\R}^{m,m} & = & (\R^{m} +1)(\R^{m}+ \cdots+\R + 1) \\
                    & = & \R^{2m}+\cdots+\R^{m+1} + 2\R^{m} + \R^{m-1} + \cdots + 1 \\
                    & = & (\R^{m} +1)^2+\R\cdot(\R^{m} + 1)(\R^{m-2}+\cdots+ 1).
\end{eqnarray*}
For the classical Minkowski space time we get
$$
\overline{\R}^{3,1} = (\R^3+1)(\R+1),
$$
and this is indeed projective line bundle over $3$-sphere.

Compactified complexified Minkowski space-time is given by
\begin{eqnarray*}
\overline{\C\BS}^4 &=& (\C^2+1)(\C^2 + \C + 1) = \C^4 +\C^3 +2\C^2 +\C + 1\\
                  &=& \C^4 + \C\cdot(\C+ 1)^2+1 = \C^4 + N\C^3,
\end{eqnarray*}
with $N\C^3 = \C\cdot \overline{\C\BS}^2 + 1,$
so it is not just replacing ``$\R$'' by ``$\C$'' in $\overline{\R}^{3,1}$.

We must still calculate the null Gra\ss mannians $NG_{p,q;k}$; they are defined as manifold of $k$-dimensional subspaces of the nullcone $N\C^{p,q} = S^{p-1}\cdot S^{q-1}\cdot\R^+ + 1.$

Let $V$ be such $k$-dimensional plane; then $V$ is spanned by the basis of the form:
$$
e_1+\epsilon_1, e_2+\epsilon_2,\ldots,e_k+\epsilon_k;\; e_1,\ldots ,e_k\in S^{p-1};\; \epsilon_1,\ldots ,\epsilon_k\in S^{q-1};
$$
orthonormal frames, so these bases belong to:
$$
e_1+\epsilon_1\in S^{p-1}\cdot S^{q-1} = 2\frac{N\C^{p,q}-1}{\R-1} = 2S^{p-1,q-1},
$$
up to
$$
e_k+\epsilon_k\in S^{p-k}\cdot S^{q-k} = 2S^{p-k,q-k},
$$
and within $V$ the total quantity of such bases is given by $O(k) = S^{k-1}\cdot S^{k-2}\cdots S^0.$ We thus have the morphological representation (with $p\geq q, \; q\geq k$)
\begin{eqnarray*}
NG_{p,q;k} &=& \frac{\left(2S^{p-1,q-1}\right)\cdots\left(2S^{p-k,q-k}\right)}{S^{k-1}\cdot S^{k-2}\cdots S^0}\\
                  &=& \frac{S^{p-1,q-1}\cdots S^{p-k,q-k}}{\R\BP^{k-1}\cdots\R\BP^{1}}\\
                  &=& \frac{S^{p-1}\cdot S^{p-2} \cdots S^{p-k}}{S^{k-1}\cdots S^{0}}S^{q-1}\cdots S^{q-k}\\
                  &=& G_{p,k}(\R)\cdot S^{q-1}\cdots S^{q-k}.
\end{eqnarray*}

Again there is another way of computing this whereby in the above, $S^{p,q}$ is replaced by $\BS^{p,q} = \overline{\R}^{p,q}$, leading up to the stereographic null-Gra\ss mannian:
\begin{eqnarray*}
N\G_{p,q;k} & = & \frac{\left(2\BS^{p-1,q-1}\right)\cdots\left(2\BS^{p-k,q-k}\right)}{S^{k-1}\cdots S^{0}}\\
            & = & \frac{\overline{\R}^{p-1,q-1}\cdots \overline{\R}^{p-k,q-k}}{\R\BP^{k-1}\cdots \R\BP^1}\\
            & = & \frac{\BS^{p-1}\cdots \BS^{p-k}\cdot \R\BP^{q-1}\cdots \R\BP^{q-k}}{\R\BP^{k-1}\cdots \R\BP^1} = G_{q,k}(\R)\cdot \BS^{p-1}\cdots \BS^{p-k}.
\end{eqnarray*}

All these manifolds give hence rise to integrable morphological objects.

For the Minkowski space-time we have the manifold of null-lines (light rays):

\begin{eqnarray*}
N\G_{4,2;2} & = & \frac{\overline{\R}^{3,1}\cdot \overline{\R}^{2,0}}{\R\BP^1} = \frac{(\R^3+1)(\R+1)(\R^2+1)}{\R+1}\\
            & = & (\R^3+1)(\R^2+1) = \BS^3\cdot \BS^2,
\end{eqnarray*}
i.e., the real twistor space.

The last case we consider here is that of the space $\C^{p,q} = \C^{p+q}$ provided with the pseudo-Hermitian form
$$
((\underline{z},\underline{u}),(\underline{z}',\underline{u}')) = (\underline{z},\underline{z}') - (\underline{u},\underline{u}') = \sum_{j=1}^{p}z_j \overline{z}'_j - \sum_{j=1}^{q}u_j \overline{u}'_j.
$$
The nullcone $((\underline{z},\underline{u}),(\underline{z},\underline{u})) = 0$ is denoted by $NC^{p,q}(\C)$ and it has real codimension one, so its real dimension equals $2p+2q-1.$ The equation is $|\underline{z}|^2 = |\underline{u}|^2$ so:
$$
NC^{p,q}(\C) = S^{2p-1}\cdot S^{2q-1}\cdot \R_+ + 1.
$$
By $T^{p,q}$ we denote the manifold of one dimensional complex subspaces of $NC^{p,q}(\C);$ it is a real submanifold of $\C\BP^{p+q-1}$ of real codimension one that hence subdivides $\C\BP^{p+q-1}$ in $3$ parts and  $T^{2,2}\subset\C\BP^3$ corresponds to ``real twistor space'' (the manifold of light-lines in Minkowski space). From the definition we have
$$
T^{p,q} = \frac{NC^{p,q}(\C)-1}{\C-1} = \frac{S^{2p-1}\cdot S^{2q-1}\cdot\R_+}{S^1\cdot\R_+} = S^{2p-1}\cdot \C\BP ^{q-1},
$$
so in particular $T^{2,2} = S^3\cdot \C\BP^1 = S^3\cdot \BS^2.$

There is another approach leading to the twister space $\T^{p,q}$ with a different quantity.

To that end we write $(\underline{z},\underline{u}) = (\underline{z}',z_p,\underline{u}', u_p)$ and consider the intersection $NC^{p,q}(\C)\cap\{z_p-u_q = 1\}$, which allows us to write
$$
z_p = \frac{1}{2}(1+\rho+\mathbf{i}\alpha),\;\; u_q = \frac{1}{2}(-1+\rho+\mathbf{i}\alpha).
$$
The equation for the point $(\underline{z},\underline{u})$ now becomes
$$
|\underline{z}'|^2-|\underline{u}'|^2 + \frac{1}{4}\left((1+\rho)^2+\alpha^2\right) - \frac{1}{4}\left((1-\rho)^2+\alpha^2\right) = 0
$$
or
$$
\rho=|\underline{z}'|^2-|\underline{u}'|^2 \; \&\;\alpha\in\R.
$$
Hence the $1D$ complex subspaces of $NC^{p,q}(\C)$ that intersect the plane $z_p - u_q = 1$ are representable by vectors of the form $\left(\underline{z}',\frac{1}{2}(1+\rho+\mathbf{i}\alpha), \underline{u}',\frac{1}{2}(-1+\rho+\mathbf{i}\alpha)\right)$ with $(\underline{z}',\underline{u}')\in \C^{p-1,q-1}$ and $\alpha\in\R.$ So this part of $\T^{p,q}$ has quantity $\C^{p+q-2}\cdot \R.$ The other points of $\T^{p,q}$ have the form $(\underline{z}',\lambda,\underline{u}', \lambda)$ so there are two cases
\begin{itemize}
\item $\lambda\neq0,$ in this case we normalize $\lambda=1$ and we have the equation $|\underline{z}'|^2-|\underline{u}'|^2=0,$ giving a version of nullcone:
$$
N\C^{p-1,q-1}(\C) = \T^{p-1,q-1}\cdot (\C-1) + 1
$$
\item In the case $\lambda=0$ we have the point $(\underline{z}',0,\underline{u}', 0)$ with equation $|\underline{z}'|=|\underline{u}'|$ and determined up to a constant $c\in\C\setminus\{0\}$, i.e., we get $\T^{p-1,q-1}$.
\end{itemize}

This leads to the recursion formula for $\T^{p,q}$ with $p\geq q:$
$$
\T^{p,q} = \C^{p+q-2}\cdot\R + \T^{p-1,q-1}\cdot \C + 1.
$$
So in particular we get

\begin{eqnarray*}
\T^{p,1} & = & \C^{p-1}\cdot\R + 1 = \R^{2p-1} + 1= \BS^{2p-1},\\
\T^{p,2} & = & \C^{p}\cdot\R + \left(\C^{p-2}\cdot\R + 1\right)\C + 1 \\
         & = & \left(\C^{p} + \C^{p-1}\right)\R + \C + 1 = \left(\C^{p-1}\cdot\R + 1\right)(\C + 1)\\
         & = & \BS^{2p-1}\cdot \C\BP^1,
\end{eqnarray*}

\begin{eqnarray*}
\T^{p,3} & = & \C^{p+1}\cdot\R + \left(\C^{p-1}\cdot\R + \C^{p-2}\cdot\R+\C+ 1\right)\C + 1 \\
         & = & \left(\C^{p-1}\cdot\R + 1\right)\left(\C^{2} + \C +1\right) = \BS^{2p-1}\cdot \C\BP^2
\end{eqnarray*}
and so, continuing in this way, we obtain for $p\geq q$:

\begin{eqnarray*}
\T^{p,q} & = & \left(\C^{p-1}\cdot\R + 1\right)\left(\C^{q-1} + \cdots+ \C +1\right) = \BS^{2p-1}\cdot \C\BP^{q-1}.
\end{eqnarray*}
In particular we re-obtain the expected formula
$$
\T^{2,2} = (\C\cdot\R+1)(\C+1) = \BS^{3}\cdot \C\BP^{1} = \BS^3\cdot\BS^2.
$$
The manifold $\C\BP^{3}$ is itself called the complex twistor space; it decomposes into real twistor space $\T^{2,2}$ together with two equals parts corresponding to $|\underline{z}|<|\underline{u}|$ and $|\underline{z}|>|\underline{u}|$. In morphological language we have the cutting experiment
\begin{eqnarray*}
\C\BP^3 - \T^{2,2} & = & \C^3 + \C^2 + \C + 1 - (\C\R+1)(\C+1) \\
                  & = & (\C^2-\C\cdot\R)(\C+1) = \C\cdot (\C-\R)(\C+1)\\
                  & = & 2\C\cdot\C_+\cdot (\C+1),
\end{eqnarray*}
which indeed gives $2$ copies of $\C\cdot\C_+\cdot (\C+1)$ whereby we put $\C_+ =\frac{\C-\R}{2}= \R\cdot\frac{\R-1}{2} = \R\cdot\R_+.$

We can also calculate the null-Gra\ss mannian $NG^{p,q;k}(\C)$ of $k$-dimensional complex subspaces of $NC^{p,q}(\C)$. Let $V$ be a complex $k$-subspace; then the frames $(\underline{t}_1;\underline{s}_1),\ldots, (\underline{t}_k;\underline{s}_k)$ may be chosen such that $(\underline{t}_j;\underline{t}_k) - (\underline{s}_j;\underline{s}_k) = 0$, of course, but we may also choose $(\underline{t}_j;\underline{t}_j)$ to be the Hermitian orthonormal frame, i.e., $(\underline{t}_j;\underline{t}_k) + (\underline{s}_j;\underline{s}_k) = 0$ and $|\underline{t}_j|=|\underline{s}_j| = 1$.

So in fact we can choose
\begin{eqnarray*}
(\underline{t}_1;\underline{s}_1) &\in & S^{2p-1}\times S^{2q-1},\\
(\underline{t}_2;\underline{s}_2) &\in & S^{2p-3}\times S^{2q-3},
\end{eqnarray*}
and so on. Moreover these frames per plane $V$ can be chosen in $U(k)$-different ways, leading up to the morphological formula for $p\geq q \geq k$

\begin{eqnarray*}
NG_{p,q;k} (\C) & = & \frac{S^{2p-1}\cdots S^{2p-2k}\cdot S^{2q-1}\cdots S^{2q-2k}}{S^{2k-1}\cdots S^{3}\cdot S^{1}}\\
                & = & S^{2p-1}\cdots S^{2p-k}\cdot \frac{\C\BP^{q-1}\cdots \C\BP^{q-k}}{\C\BP^{k-1}\cdots \C\BP^{1}}.
\end{eqnarray*}
A similar calculation can be made using the stereographic spheres, leading to:
\begin{eqnarray*}
N\G_{p,q;k} (\C) & = & \frac{\left(\T^{p,q}\cdot\frac{(\C-1)}{\R_+}\right)\cdot\left(\T^{p-1,q-1}\cdot\frac{(\C-1)}{\R_+}\right)\cdots \left(\T^{p-k+1,q-k+1}\cdot\frac{(\C-1)}{\R_+}\right)}{U(k)}\\
                & = & \BS^{2p-1}\cdots \BS^{2p-2k}\cdot \frac{\C\BP^{q-1}\cdots \C\BP^{q-k}}{\C\BP^{k-1}\cdots \C\BP^{1}},
\end{eqnarray*}
and in particular for $p=q=2$, $k=2$ we obtain:

$$
N\G_{2,2;2} (\C) = \frac{\BS^3\cdot\BS^1\cdot \C\BP^1}{\C\BP^1} = (\R^3+1)(\R+1),
$$
which corresponds to the real compactified  Minkowski space.

The compactified complex Minkowski space corresponds to:
$$
G_{4,2} = \frac{S^7\cdot S^5}{S^3\cdot S^1} = \BS^4\cdot \C\BP^2 = (\C^2+1)(\C^2+\C+1) = \overline{\C\BS}^4,
$$
as can be shown using bivectors and Klein quadric.

\section{Conclusions and Remarks}

\begin{description}
\item[(i)] COMPLETENESS

Morphological calculus is best compared with a museum. It consists of a lots of special names, algebraic expressions and calculations that stand for geometrical objects and operations on these objects.

In this paper we presented morphological calculus for the most important classical manifolds. Like any museum, also our collection is incomplete. For example a full morphological treatment  for the spin groups $\mathrm{Spin}(m)$ and $\mathrm{Spin}(p,q)$ is still to be done and there is a vast collection of special manifolds or objects to be added to the catalogue.

In building up our museum we give preference to the most interesting special manifolds (canonical manifolds) as well as to the ``simplest ways of introducing them''. So in fact the calculus is entirely based on examples of objects and experiments; there is no idea of ``a general manifold'' and no theory behind the scene.
\\

\item[(ii)] CORRECTNESS

Morphological calculus is correct in the sense that it takes space within the language of calculus that is a correct language based on clear rules. This leads to the notion of quantity, which is in fact what a manifold becomes once it is introduced within the calculus language. This is practically done by assigning a name to an object along with an algebraic relation that expresses the definition of the object in calculus. The notion of quantity is somewhat comparable to the notions of cardinality and of volume that are used to express the contents or size of an object. But there is no mathematical definition for it; it is an imaginary substance that resides entirely within the calculus.

The main problem is not the calculus itself but the way of translating objects of geometry into calculus expressions (morphological analysis); it usually happens that one and the same object can be translated into morphological language in many ways and that may cause confusion.

To give an example, the compactified Minkowski space is given by
$$
\overline{\R}^{3,4} = \R^4 + \R(\R^2+1) + 1 = (\R^3 + 1)(\R + 1)
$$
whereby $\R^4$ is the usual Minkowski space and
$$
\R(\R^2+1)+1 = (2\R_+\BS^2 + 1) + \BS^2
$$
is a compactified light cone at infinity whereby we made use of stereographic sphere $\BS^2$. What would happen if we replace $\BS^2$ by the usual sphere $S^2$ Well, we would get in total:

\begin{eqnarray*}
\R^4 + \R(2\R^2 + 2\R + 2) + 1 &=& \R^4 + 2\R^3 + 2\R^2 + 2\R + 1 \\
 =(\R^3 + \R^2 + \R + 1)(\R+1)  &=& \R\BP^3\cdot\frac{S^1}{2} = \frac{S^3}{2}\frac{S^1}{2} \\
 =\BS^2\cdot \BS^1\cdot \BS^1. & &
\end{eqnarray*}
This is no longer Minkowski space-time, yet there exists a meaningful interpretation for this object, namely the manifold of pairs $(e^{\mathbf{i}\theta}\underline{\omega},-e^{\mathbf{i}\theta}\underline{\omega})$ in $\C^4$, with $e^{\mathbf{i}\theta}\underline{\omega}\in LS^3,$ the Lie sphere. For this manifold the above calculation makes sense. So the problem is not only to know what calculation to make to describe an object correctly but also how to correctly interpret a calculation (morphological synthesis). It often happens that different objects turn out to share the same quantity.

There is no way to avoid these problems; one simply has to experiment until one finds the best fitting calculations or interpretations. This may be seen as a drawback, but we see it as a stronghold that illustrates the richness of the morphological language.
\\

\item[(iii)] CONSISTENCY

Morphological calculus may be compared to making the bill of a meal in a restaurant; usually the bill adds up correctly but sometimes the sum of the ingredients of the meal is more expensive than the meal.

Here is a example in morphological calculus: Consider the space $\R^2_n$ of bivectors in a Clifford algebra: $b=\sum_{i<j}^{}b_{ij}e_ie_j$.

Then $\R^2_n$ is a real vector space of dimension ${n\choose 2}$:
$$\R^2_n = \R^{{n\choose 2}},$$
 but on the other hand , $b\in \R^2_n\setminus\{0\}$ may be written as:
 $$
 b = r_1I_1 + \cdots + r_sI_s, \;\; 2s\leq n,
 $$
whereby $r_1\geq r_2\geq \cdots \geq r_s>0$ is unique and $I_j = \underline{\omega}_j\wedge\underline{\nu}_j$, $|\underline{\omega}_j|=|\underline{\nu}_j| =1$, $|\underline{\omega}_j|\perp|\underline{\nu}_j|$ is a $2$-blade such that $I_jI_k = I_kI_j$, i.e., $(\underline{\omega}_1,\underline{\nu}_1, \ldots, \underline{\omega}_j,\underline{\nu}_s)\in V_{n,2s}(\R)$.

This leads to a partition of $\R^2_n$ into orbits of the orthogonal group $O(r_1,\ldots, r_s)$, which one may calculate morphologically and add up properly.

For $n=3$, $b = r\underline{\omega}\wedge\underline{\nu} \in \R_+ \times \widetilde{G_{3,2}}(\R)$, leading to
$$
\R_3^2 -1 = \widetilde{G_{3,2}}(\R)\cdot\R_+ = S^2\cdot\R_+ =\R^3 - 1,
$$
which adds up correctly. But already for $n=4$ there is a problem. Every $b\in\R^2_4\setminus\{0\}$ may be written as
$$
b = r_1\underline{\omega}_1\wedge\underline{\nu}_1 + r_2\underline{\omega}_2\wedge\underline{\nu}_2,\;\; r_1\geq r_2\geq 0
$$
and there are three cases:
\begin{enumerate}
\item for $r_1>r_2> 0$ the blades $\underline{\omega}_1\wedge\underline{\nu}_1$ and $\underline{\omega}_2\wedge\underline{\nu}_2$ are uniquely determined in terms of $b$, so we have in fact:
\begin{eqnarray*}
   r_2 > 0 & \in & \R_+, \; r_1>r_2\in\R_+,\\
   \underline{\omega}_1\wedge\underline{\nu}_1 & \in & \widetilde{G_{4,2}}(\R) = \frac{S^3\cdot S^2}{S^1} = \BS^2\cdot S^2
\end{eqnarray*}
$[\underline{\omega}_2\wedge\underline{\nu}_2,\underline{\omega}_1\wedge\underline{\nu}_1]=0$ leaves $2$ possibilities: $\underline{\omega}_2\wedge\underline{\nu}_2=\pm \underline{\omega}_1\wedge\underline{\nu}_1\cdot e_{1234}$.

So, in morphological terms we get:
$$
2\BS^2\cdot S^2\cdot\R_+\cdot\R_+ = 2\R_+\cdot\BS^2\cdot(\R^3 -1).
$$
\item for $r_1>0$, $r_2= 0$ we get $b = r\underline{\omega}\wedge\underline{\nu}$ with $r\in\R_+$ and $\underline{\omega}\wedge\underline{\nu}\in \widetilde{G_{4,2}}(\R) = \BS^2\cdot S^2$, so in total $\BS^2\cdot S^2\cdot \R_+ = \BS^2\cdot (\R^3 - 1).$
\item in case $r_1=r_2=r> 0$ we get
$$
b = r(\underline{\omega}_1\wedge\underline{\nu}_1 + \underline{\omega}_2\wedge\underline{\nu}_2),
$$
whereby either $\underline{\omega}_2\wedge\underline{\nu}_2 = \pm e_{1234}\underline{\omega}_1\wedge\underline{\nu}_1$, so $b = r \underline{\omega}\wedge\underline{\nu}(1\pm e_{1234})$. Hereby $\underline{\omega}\wedge\underline{\nu}$ may be chosen to belong to $\widetilde{G_{3,2}}(\R) = S^2$ because in fact every bivector $b\in\R_4^2$ may be decomposed uniquely into self-dual and anti-self-dual parts:
$$
b=\frac{1}{2}(1+e_{1234})b_+ + \frac{1}{2}(1-e_{1234})b_-, \;\; b_{\pm} \in\R_3^2.
$$
So in the above case, $\underline{\omega}\wedge\underline{\nu}\in\widetilde{G_{3,2}}(\R)$ is unique, so that the morphological contribution is given by
$$
2\R_+\cdot S^2 = 2(\R^3 - 1).
$$
\end{enumerate}

Hence, adding up $(1)+(2)+(3)$, we get a total morphological sum of
\begin{eqnarray*}
(2\R_+ + 1)\BS^2\cdot (\R^3-1) + 2(\R^3 - 1) \\
(\R\cdot (\R^2+1)+2)\cdot (\R^3-1) \neq (\R^3 + 1)(\R^3 - 1) = \R^6 -1.
\end{eqnarray*}
The gap in the calculation lies in the difference between $\R\cdot (\R^2+1)+2$ and $\BS^3 = \R^3 + 1.$ If in the above we would replace $\R^2+1=\BS^2$ by $2\R^2+2\R+2=S^2$ we would get a factor $\R\cdot S^2+2 = S^3$ and replacing then $S^2$ by $\BS^2$ would make the bill add up correctly.

So in fact $\R\cdot (\R^2 + 1) + 2$ may be interpreted as an oversized version of the Poincar\'e sphere $\BS^3=\R^3+1.$

Also for the bivector space $\R^2_5$ we have three cases:
\begin{enumerate}
\item in case $r_1>r_2>0$ we obtain the quantity:
$$
\R^2_+\cdot\frac{S^4\cdot S^3}{S^1}\cdot\frac{S^2\cdot S^1}{S^1} = (\R^5-1)\cdot\BS^2\cdot(\R^3-1)
$$

\item in case $r_1>r_2=0$ we obtain:
$$
\R_+\cdot\frac{S^4\cdot S^3}{S^1} = (\R^5-1)\cdot\BS^2
$$
\item in case $r_1=r_2=r>0$ we get bivectors of the form
$r(\underline{\omega}_1\wedge\underline{\nu}_1 + \underline{\omega}_2\wedge\underline{\nu}_2)$ in $\R^5$; the number of choices for $\mathrm{span}\{\underline{\omega}_1,\underline{\nu}_1,\underline{\omega}_2,\underline{\nu}_2\}$ equals $\widetilde{G_{5,1}}(\R) = \frac{S^4}{2}$ while for each choice we have the quantity $2\R_+ S^2$ as before, leading to a total of
$$
\frac{S^4}{2}(2\R_+ S^2) = (\R^5-1)\cdot S^2.
$$
So, the total bill for $\R_5^2$ reads
\begin{gather*}
(\R^5-1)((1+\R^2)\R^3 + 2\R^2 + 2\R + 2)\\
 = (\R^5-1)(\R^5 + (\R+1)(\R^2+\R+1)+1),
\end{gather*}
while we would need the second factor to be equal to $\R^5+1$ to make the bill add up correctly.
\end{enumerate}

From $n\geq 6$ on the calculation of $\R_n^2$ is much more complicated so we won't do it here, but in any case we won't get just $\R^{{n\choose 2}}$. This may be seem as an inconsistency which is likely to repeat itself in cases of partitions of geometrical objects. We have no solution as even explanation of this, but it is clear that one can study this phenomena within the language of morphological calculus, which in itself is consistent.
\\

\item[(iv)] CALCULUS STYLES

A calculus style is obtained by making certain restrictions on the use of the calculus language and by a certain kind of application or focus.

In the canonical style we decided to replace the relation $\R=2\R + 1$ by its more rigorous form  $\R=2\R_+ + 1$ in order to avoid too many unwanted identifications.

This leads to the possibility to apply the rules of calculus on a free basis (commutativity, brackets, etc.) whereby our focus is the calculation of quantity for a large collection of manifolds and this calculation arises from a morphological analysis of the geometrical objects (and constructions) we are interested in.

In the formal style we start off from a given quantity, a polynomial $a_0\R^n + \cdots + a_n$ with $a_0>0,$ $a_1,\ldots,a_n\in\N$ say, and we consider the collection of all the algebraic expressions that evaluate to this quantity. Since we already start with a polynomial with positive integer coefficients, we won't consider any subtractions or divisions here, just addition and multiplication. Also we won't be using $\R_+$ here and the relation $\R=2\R_+ + 1$ will be replaced by a non-commutative and non-associative version of $\R=2\R+1:$
$$
\R=\R+1 + \R,\;\;\; \R=\R + (\R + 1),
$$
the use of which leads to a change in the quantity. Also other calculations involving commuting terms or factors or placing or removing brackets are seen as as morphisms on the collection of morphological objects. So for each quantity we have basically a category.

Parallel to this, for each polynomial $a_0\R^n + \cdots + a_n$ we also have the set of all geometrical objects that can be formed by glueing together (or not) $a_0$ copies of $\R^n,$ $a_1$ copies of $\R^{n-1},\ldots,a_n$ points. The focus now is to study possible correlations between the category of algebraic expressions and geometrical objects (graphs) for a given quantity; this is morphological synthesis.

For example for $\R+1$ we have two expressions
$$
\R + 1,\;\;\; 1 + \R
$$
and two geometrical objects (apart from trivial disjoint union) semi-interval $[0,1[$ or circle $\BS^1$ and one possible correlation is to identify $\R+1$ with $[0,1[$ and $1+\R$ with the circle $\BS^1$.

The more general case $a\R+b$ leads to a kind of calligraphy that we'll study in forthcoming work. For this reason we will speak of calligraphical calculus.
\end{description}

\section{Outlook}

It is not easy to provide complete references to the topic of morphological calculus but certain examples of it as well as related topics are certainly available throughout the mathematical literature. First of all there is our paper \cite{6FS} in which we gave an introduction to morphological calculus which	was subdivided into an axiomatic approach, a canonical part and a formal part based on the formal language of calculus. In this paper we focused mainly on	the canonical part by giving many more new examples of interesting calculations. Morphological calculus can be seen as a formal language and the task of constructing good geometrical interpretations of calculations, called morphological synthesis, can be seen as part of a research field called the theory of Lindenmayer systems (L-systems) for which there is a vast literature. We only refer to \cite{5RS}. Also in the book \cite{4Pen} by Roger Penrose the language of calculus	has been discussed, in particular the meaning of commutativity of the multiplication has been critically investigated. But the present paper is mostly concerned with examples concerning spheres, real and complex projective spaces, special Lie groups and homogeneous spaces including Stiefel manifolds and Gra\ss mannians, various types of complex spheres and real and complex nullcones. All of this belongs to the theory of special manifolds (see e.g. \cite{7War}).

In particular we also discussed real and complex compactified Minkowski spaces as well as twistor spaces which have many applications in mathematical physics and for which we refer to the pioneering work \cite{3PH} of R. Penrose and W. Rindler. Morphological calculus is of course also related to various topics in algebraic topology in particular Betti numbers, homology and cohomology, Poincar\'e polynomials, Euler characteristics and much more that is to be found all over the literature (use Wikipedia and see also \cite{7War}). Finally, many of our calculations also make use of bivector spaces, Clifford algebras and Spin groups for which we refer to the books \cite{1DSS} and \cite{2Lou}.

\section{Acknowledgement}
The author wishes to thank Dr. Narciso Gomes (University of Cape Verde - Uni-CV) for his help in the critical reading and the painstaking task of typewriting this manuscript. We also wish to thank the referee for his valuable suggestions during the preparation of the manuscript.


\end{document}